\newcommand{\picturename}[1]{#1.eps}
\newtheorem{teor}{Theorem}[section]
\newtheorem{prop}[teor]{Proposition}
\newtheorem{lemma}[teor]{Lemma}
\newtheorem{cor}[teor]{Corollary}
\theoremstyle{definition}
\newtheorem{dhef}[teor]{Definition}
\theoremstyle{remark}
\newtheorem{rk}[teor]{Remark}
\newtheorem{ehse}[teor]{Example}
\long\def\elimina#1{} 
\def\R{\mathbb{R}}
\def\N{\mathbb{N}}
\def\pscal#1#2{\left\langle#1,\,#2\right\rangle}
\def\gauge{\rho}
\def\pgauge{\gauge^0}
\def\len{l}
\def\vf{v_f}
\def\uo{u_{\phi}}
\def\wo{w_{\phi}}
\def\uf{u_{f}}
\def\pscal#1#2{\langle#1,\,#2\rangle}
\def\oray#1{{]\!]#1[\![}}
\def\cray#1{{[\![#1]\!]}}
\def\ccurve#1{{\Gamma}_{#1}}
\def\len#1{L(#1)}
\def\spaceX{X_{\phi}}
\def\curveD#1{\Gamma^D_{#1}}
\def\up{u^{\psi}}
\def\wp{w^{\psi}}
\DeclareMathOperator{\inte}{int} \DeclareMathOperator{\spt}{supp}
 \DeclareMathOperator{\dive}{div}
\DeclareMathOperator{\proj}{\Pi}
\begin{document}

\title[A nonhomogeneous boundary value problem]
{A nonhomogeneous boundary value problem \\ in mass transfer theory}%
\author[G.~Crasta]{Graziano Crasta}
\address{Dipartimento di Matematica ``G.\ Castelnuovo'', Univ.\ di Roma I\\
P.le A.\ Moro 2 -- 00185 Roma (Italy)}
\email[Graziano Crasta]{crasta@mat.uniroma1.it}
\author[A.~Malusa]{Annalisa Malusa}
\email[Annalisa Malusa]{malusa@mat.uniroma1.it}
\keywords{Boundary value problems,
mass transfer theory}
\subjclass[2010]{Primary 35A02, Secondary 35J25}


\begin{abstract}
We prove a uniqueness result of solutions for a system of PDEs of
Monge-Kantorovich type arising in problems of mass transfer theory.
The results are obtained under very mild regularity assumptions both
on the reference set $\Omega\subset \R^n$, and on the (possibly asymmetric) norm
defined in $\Omega$. In the special case when $\Omega$ is endowed with the Euclidean
metric, our results provide a complete description of the stationary
solutions to the tray table problem in granular matter theory.
\end{abstract}

\maketitle

\section{Introduction}

The model system usually considered for the description of the stationary
configurations of sandpiles on a tray table is the Monge-Kantorovich type
system of PDEs
\[
\begin{cases}
-\dive(v\, Du) = f & \text{in $\Omega$},\\
|Du|\leq 1,\ v\geq 0 & \text{in $\Omega$},\\
(1-|Du|)v=0 & \text{in $\Omega$},\\
u=\phi & \text{on $\partial\Omega$}
\end{cases}
\]
(see, e.g., \cite{AEW,CaCa,HK}).
The data of the problem are the flat surface of the table $\Omega\subseteq \R^2$, the profile
of the tray table $\phi$, and the density of the source $f\geq 0$.
The dynamical behavior of the granular matter is pictured by the pair
$(u,v)$, where $u$ is the profile of the standing layer, whose slope has not to exceed a critical value
($|Du|\leq 1$)
in order to prevent avalanches, while $v\geq 0$ is the thickness of the rolling layer
(see Figure~\ref{fig:sand}).
The condition $(1-|Du|)v=0$ corresponds to require that
the matter runs down only in the region where the slope of the heaps is maximal.
On the border of the table the standing layer fills the gap with the height $\phi$, and the exceeding sand falls down.
If $\phi=0$, the model reduces to the open table problem, largely investigated in recent years
(see e.g.\ \cite{CaCa,CCCG,CCS,Pr} and the references therein).

\begin{figure}
\includegraphics[height=4cm]{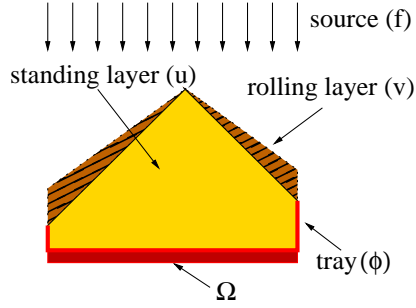}\qquad\qquad
\caption{The tray table problem}
\label{fig:sand}
\end{figure}

In the present paper we consider a more general system of PDEs in the open,
bounded and connected set $\Omega\subseteq \R^n$
\begin{equation}\label{f:MKint}
\begin{cases}
-\dive(v\, D\gauge(Du)) = f
&\text{in $\Omega$},\\
\gauge(Du)\leq 1,\ v\geq 0
&\text{in $\Omega$},\\
(1-\gauge(Du))v=0 &\text{in $\Omega$},\\
u=\phi &\text{on $\partial\Omega$}\,,
\end{cases}
\end{equation}
in the unknowns $v\in L^1_+(\Omega)$, $u\in W^{1,\infty}(\Omega)\cap C(\overline{\Omega})$.
The data are:
\begin{itemize}
\item[-] $K\subseteq \R^n$, a compact convex set of class $C^1$
containing the origin in its interior, and its (strictly convex) polar set $K^0$.
The set $K$ appears in (\ref{f:MKint}) by means of its gauge function $\gauge \colon \R^n \to [0,+\infty)$.
Moreover the closure of 
$\Omega$ will be equipped with a (possibly asymmetric) geodesic distance $d_L$ given in terms of the gauge function $\pgauge$ of the polar set $K^0$
(see Section~\ref{s:extens});
\item[-] $f\in L^1_+(\Omega)$; 
\item[-] $\phi\colon \partial\Omega \to \R^n$, $1$-Lipschitz function w.r.t.\ $d_L$.
\end{itemize}
This general version takes into account the possibility of homogeneous aniso\-tropies,
and can be applied, for example, to the Bean's model for the description of the
macroscopic electrodynamics
of hard superconductors (see \cite{CMf,CMg,CMi,CMh},  for the
case $\phi=0$ and $\Omega$, $K$ smooth sets),
and to optimal shape design (see \cite{BoBu}).

S.~Bianchini in \cite{Bi} proved that
the Lax--Hopf function $\uo$ (defined in (\ref{f:LH}) below),
coupled with an explicit $\vf\in L^1_+(\Omega)$ solves (\ref{f:MKint})
(see Theorem~\ref{f:bian}).
The function $\vf$ is constructed as follows. The set $\Omega$ can be covered by a
family of disjoint transport rays, and every $v$-component of a solution to (\ref{f:MKint})
satisfies a first order linear ODE along almost every ray.
The function $\vf$ is then uniquely defined along each ray, by requiring that it vanishes
at the final point. This procedure gives a solution to (\ref{f:MKint}), thanks to a
disintegration formula for the Lebesgue measure along the rays.

In this paper we shall provide a complete characterization of the solutions $(u,v)$
to (\ref{f:MKint}).
Starting from the fact that $(\uo, \vf)$ is a solution, we show that
a pair $(u,v)$ is a solution if and only if both $(u, \vf)$ and $(\uo, v)$ are
solutions, so that we can decouple the original problem (see Section~\ref{s:minimum}).
Moreover, the admissible $u$-components are characterized as the solutions of a minimization problem
which has an explicit minimal solution $\uf$ depending on the source term $f$.
Since $\uo$ is the maximal solution of the problem, the uniqueness
of the $u$-component corresponds to the case when $\uf=\uo$. We prove that this can occur
if and only if the final points in $\Omega$ of the transport rays are contained in the support
of the source $f$ (see Section \ref{s:uniqueu}).
Concerning the $v$-component, in \cite{Bi} it is proved that every admissible $v$ has to vanish
at the final points in $\Omega$ of the transport rays. Hence, by the very definition of $\vf$,
it is the unique $v$-component if the union $T$ of the rays with both endpoints on $\partial \Omega$
has null Lebesgue measure.
We shall prove that the converse
is also true: if the set $T$ has positive Lebesgue measure, we can construct
a nontrivial function $v\in L^1_+(\Omega)$ such that $-\dive(v\, D\gauge(D\uo)) = 0$, so that the uniqueness
of the $v$-component fails.

These results generalize those in \cite{CMf,CMg,CMi} to the case when
the reference set $\Omega$ is possibly non regular, the boundary datum $\phi$ is possibly
non-homogeneous, and the constraint $K$ is a possibly non-strictly convex set.
In this case we have to use a completely different strategy with respect to the
previous approaches.

Namely, if $\Omega$ and $K$ are smooth enough, then it is
possible to define a suitable notion of  (anisotropic) curvatures of $\partial \Omega$,
and the analysis of solutions to (\ref{f:MKint}) can be performed using the methods
of Differential Geometry (see, e.g.,
\cite{CaMS,CMf,CMg,CMi,IT,LN,MM}).
In the non-smooth case these methods do not work, and a different approach is needed, based on the
techniques developed in Optimal Transport Theory (see \cite{Amb,Bi, Vil}).
Namely, system (\ref{f:MKint}) can be understood as the PDE formulation of an optimal transport
problem with strictly convex cost function $\pgauge$, assigned initial distribution $\mu=f\, dx$ and
transport potential given by the Lax-Hopf function $\uo$ (so that the final mass distribution $\nu$
is the measure concentrated on $\partial \Omega$ induced by the transportation of $\mu$ along the rays
associated to $\uo$). From this point of view, the minimization problem considered in Section~\ref{s:minimum}
turns out to be the dual formulation of the Monge problem (see, e.g., \cite{Vil}
for an exhaustive presentation of this subject).

A comment on the regularity assumption on $K$ is in order.
Namely, if $K$ is of class $C^1$, then its polar set $K^0$ need not be of class $C^1$,
but it is strictly convex.
It is well-known that the strict convexity of $K^0$, which is the unit ball of the
dual norm $\pgauge$, allows non-trivial simplifications in the proofs;
for example, the direction of the transport rays coincides with $D\gauge(Du)$
almost everywhere in $\Omega$.
Related results in the case of a general possibly non-regular constraint $K$
can be found in \cite{BiGl}.

The paper is organized as follows. In Section~\ref{s:prel} we collect all the notation and preliminary results.
In Section~\ref{s:extens}
we recall some generalization of the McShane's extension theorem for Lipschitz functions with
constrained gradient. These general results will be used for introducing the maximal and the
minimal extension of the boundary datum $\phi$ consistent with the gradient constraint.
Section~\ref{s:existence} is devoted to
the statement of the problem, some remarks on the hypotheses,
and the statement of the existence result due to S.~Bianchini
(see \cite[Thm.~7.1]{Bi}).
One of the main tools needed for proving the uniqueness results is the fact that (\ref{f:MKint}) is the Euler-Lagrange
necessary condition for a minimization problem with a gradient constraint.
This property is proved in Section~\ref{s:minimum}.
The necessary and sufficient conditions for the uniqueness of the solutions $(u,v)$ to (\ref{f:MKint}) are proved
in Sections~\ref{s:unique} and~\ref{s:uniqueu}.
As a byproduct of our analysis, with a little additional effort, we provide a
complete description of the singular sets considered in this kind of optimal transport problems.

\bigskip
\textbf{Acknowledgements.}
The authors wish to thank the anonymous referee
for valuable comments that have improved the presentation of the manuscript.

\section{Notation and preliminaries}
\label{s:prel}

The standard scalar product of $x,y\in\R^n$
will be denoted by $\pscal{x}{y}$, while $|x|$ will denote the
Euclidean norm of $x$.
Concerning the segment joining $x$ with $y$, we set
\[
\cray{x,y} := \{tx+(1-t)y;\ t\in [0,1]\},
\qquad \oray{x,y} := \cray{x,y}\setminus\{x,y\}.
\]

Given a set $A\subset \R^n$, its interior, its closure and its boundary
will be denoted by $\inte A$, $\overline{A}$ and $\partial A$ respectively.

A bounded open set $\mathcal{O}\subset\R^n$
(or, equivalently,
$\overline{\mathcal{O}}$ or $\partial\mathcal{O}$)
is of class $C^k$, $k\in\N$,
if for every point $x_0\in\partial \mathcal{O}$
there exists a ball $B=B_r(x_0)$ and a one-to-one
mapping $\psi\colon B\to D$ such that
$\psi\in C^k(B)$, $\psi^{-1}\in C^k(D)$,
$\psi(B\cap \mathcal{O})\subseteq\{x\in\R^n;\ x_n > 0\}$,
$\psi(B\cap\partial \mathcal{O})\subseteq\{x\in\R^n;\ x_n = 0\}$.

Let us now fix the notation and the basic results concerning
the convex set which plays the role of gradient constraint
for the $u$-component in (\ref{f:MKint}).
In the following we shall assume that
\begin{equation}\label{f:hypk}
\text{$K$ is a compact, convex subset of $\R^n$ of class $C^1$, with $0\in\inte K$.}
\end{equation}
Let us denote by $K^0$ the
polar set of $K$, that is
\[
K^0 := \{p\in\R^n;\ \pscal{p}{x}\leq 1\ \forall x\in K\}\,.
\]
We recall that, if $K$ satisfies (\ref{f:hypk}), then
$K^0$ is a compact, strictly convex subset of $\R^n$
containing the origin in its interior, and $K^{00} = (K^0)^0 = K$
(see, e.g., \cite{Sch}).

The gauge function $\gauge\colon \R^n \to \R$ of $K$ is defined by
\[
\gauge(\xi) := \inf\{ t\geq 0;\ \xi\in t K\}=\max \{\pscal{\xi}{\eta},\ \eta\in K^0\}\,,
\quad \xi\in\R^n\,.
\]
It is straightforward to see that $\gauge$ is a positively 1-homogeneous convex function such that
$K=\{\xi\in\R^n\colon\ \gauge(\xi)\leq 1\}$. The gauge function of the set $K^0$ will be denoted by $\pgauge$.

The properties of the gauge functions needed in the paper are collected in the following
theorem.

\begin{teor}\label{t:sch}
Assume that $K\subseteq \R^n$ satisfies (\ref{f:hypk}).
Then the following hold:

\par\noindent (i) $\gauge$ is continuously
differentiable in $\R^n\setminus \{0\}$, and
\[
\gauge(\xi+\eta)\leq \gauge(\xi) + \gauge(\eta)\, \quad \forall\ \xi,\eta\in\R^n\,.
\]

\par\noindent (ii) $K^0$ is strictly convex,  and
\[
\begin{split}
\pgauge(\xi+\eta) & \leq \pgauge(\xi) + \pgauge(\eta),  \quad \forall\ \xi,\eta\in\R^n \\
\pgauge(\xi+\eta) &= \pgauge(\xi) + \pgauge(\eta)\ \Leftrightarrow\
\exists \lambda\geq 0\ \colon\ \xi = \lambda\, \eta\ \text{or}\ \eta = \lambda\, \xi.
\end{split}
\]

\par\noindent (iii)
For every $\xi\neq 0$,
$D\gauge(\xi)$ belongs to $\partial K^0$, and
\[
\pscal{D\gauge(\xi)}{\xi} = \gauge(\xi),\quad
\pscal{p}{\xi} < \gauge(\xi)\ \forall p\in K^0,\ p\neq D\gauge(\xi)\,.
\]
\end{teor}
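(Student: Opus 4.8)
The plan is to derive everything from the duality between $K$ and $K^0$ and the basic convex-analysis characterization of the gauge as a support function. Recall that, by the formula $\gauge(\xi)=\max\{\pscal{\xi}{\eta}:\eta\in K^0\}$, the gauge $\gauge$ is precisely the support function of $K^0$; symmetrically, since $K^{00}=K$, the dual gauge $\pgauge$ is the support function of $K$. The subadditivity inequalities in (i) and (ii) are then immediate from the fact that any support function is sublinear (positively homogeneous and subadditive): the maximum of a linear functional over a fixed convex body is convex and $1$-homogeneous, hence subadditive. So the first assertion of (i) and the inequality in (ii) require essentially no work beyond quoting sublinearity of support functions.

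The differentiability claim in (i) is the first place where hypothesis (\ref{f:hypk}) enters seriously. First I would observe that a support function $h_C$ is differentiable at a point $\xi\neq 0$ if and only if the supporting hyperplane of $C$ in direction $\xi$ touches $C$ at a unique point, i.e.\ if and only if the face of $C$ exposed by $\xi$ is a singleton; in that case the gradient equals that exposed point. Applying this with $C=K^0$ and $\gauge=h_{K^0}$, the $C^1$ regularity of $\gauge$ on $\R^n\setminus\{0\}$ is equivalent to $K^0$ being strictly convex (every boundary point is exposed and the map $\xi\mapsto$ exposed point is continuous). This is exactly where I must prove that (\ref{f:hypk}) forces $K^0$ to be strictly convex — which is the crux of (ii) as well. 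The standard duality fact is: $K$ smooth ($C^1$ boundary, i.e.\ a unique supporting hyperplane at each boundary point) is equivalent to $K^0$ being strictly convex (a unique boundary point in each exposed face). I would prove this by a polarity argument: a segment contained in $\partial K^0$ would dualize, via the correspondence between faces of $K$ and faces of $K^0$ given by polarity, to a corner (non-unique normal) on $\partial K$, contradicting the $C^1$ assumption. This can be cited from Schneider \cite{Sch}, but the mechanism is the normal/tangent cone duality. The equality case in (ii), $\pgauge(\xi+\eta)=\pgauge(\xi)+\pgauge(\eta)$ iff $\xi,\eta$ are parallel (with nonnegative ratio), is then the sharp form of strict convexity of the unit ball of $\pgauge$: since $\pgauge$ is the gauge of the strictly convex set $K^0$, equality in the triangle inequality for a gauge of a strictly convex body holds exactly when the two vectors are positively proportional. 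I would prove this directly from strict convexity of $K^0$ by examining when $(\xi+\eta)/\pgauge(\xi+\eta)$ can lie on the boundary as a proper convex combination of $\xi/\pgauge(\xi)$ and $\eta/\pgauge(\eta)$.

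For (iii), I would use the identification of the gradient of a support function with the exposed point. Since $\gauge=h_{K^0}$ and $\gauge$ is differentiable at $\xi\neq 0$, the gradient $D\gauge(\xi)$ is the unique maximizer in the definition $\gauge(\xi)=\max_{\eta\in K^0}\pscal{\xi}{\eta}$; this maximizer lies on $\partial K^0$ and satisfies $\pscal{D\gauge(\xi)}{\xi}=\gauge(\xi)$, while every other $p\in K^0$ gives the strict inequality $\pscal{p}{\xi}<\gauge(\xi)$ precisely because the maximizer is unique by strict convexity of $K^0$. Equivalently this is the Euler identity for the $1$-homogeneous function $\gauge$ (so $\pscal{D\gauge(\xi)}{\xi}=\gauge(\xi)$ follows directly from homogeneity), combined with the strict-convexity uniqueness for the strict inequality. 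The main obstacle in the whole argument is the strict convexity of $K^0$ deduced from the $C^1$ smoothness of $K$; once that polarity fact is in hand, differentiability, the equality case, and (iii) all follow from the support-function/exposed-point dictionary. I would therefore front-load the proof with a clean statement of the $K$-smooth $\Leftrightarrow$ $K^0$-strictly-convex duality, citing \cite{Sch}, and then deduce the remaining items as corollaries.
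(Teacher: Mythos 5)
Your proposal is correct, and it is essentially the standard support-function/polarity argument from Schneider's book, which is exactly what the paper relies on: its entire proof of Theorem~\ref{t:sch} is the citation ``See \cite{Sch}, Section 1.7.'' In effect you have filled in the details behind that citation (sublinearity of support functions, the smoothness-of-$K$ $\Leftrightarrow$ strict-convexity-of-$K^0$ duality, and the gradient-equals-exposed-point dictionary), so there is no genuine divergence to report.
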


\begin{proof}
See \cite{Sch}, Section 1.7.
\end{proof}

In what follows we shall consider $\R^n$ endowed with the possibly asymmetric norm $\pgauge(x-y)$,
$x,y\in\R^n$.
By Theorem~\ref{t:sch}(ii), the unit ball $K^0$ of $\pgauge$ is strictly convex but,
under the sole assumption (\ref{f:hypk}), it need not be differentiable.
Moreover, the Minkowski structure $(\R^n, \pgauge)$
is not a metric space in the usual sense, since $\pgauge$
need not be symmetric (for an introduction to nonsymmetric metrics see \cite{Gromov}).
Finally, since
$K^0$ is compact and $0\in \inte K^0$, then the convex metric is equivalent to the Euclidean one,
that is there exist $c_1$, $c_2>0$ such that $c_1|\xi|\leq \pgauge(\xi)\leq c_2 |\xi|$ for every $\xi\in\R^n$.

The main motivation for introducing the convex metric associated to $\pgauge$ is the fact that the
Sobolev functions with the gradient constrained to belong to $K$ are
the locally 1-Lipschitz functions with respect to $\pgauge$, as stated in the following result.

\begin{lemma}\label{l:diseqrho}
Let $\mathcal{O}\subset\R^n$ be a nonempty open set, and assume that the set $K\subset\R^n$ satisfies (\ref{f:hypk}).
Let $\pgauge$ be the gauge function of $K^0$. Then the following properties are equivalent.
\begin{itemize}
\item[i)] $u\colon \mathcal{O} \to \R$  is a locally $1$-Lipschitz function with respect to $\pgauge$, i.e.
\begin{equation}\label{f:lipur}
u(x_2)-u(x_1)\leq\pgauge(x_2-x_1)\quad \text{for every}\
\cray{x_1,x_2}\subset\mathcal{O}.
\end{equation}
\item[ii)]
$u\in W^{1,\infty}(\mathcal{O})$, and $Du(x) \in K$ for a.e.\ $x\in\mathcal{O}$.
\end{itemize}
\end{lemma}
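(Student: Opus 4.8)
\emph{Plan.} The starting point is to record what $\pgauge$ actually is. Applying the formula defining $\gauge$ to the set $K^0$ (whose polar equals $K^{00}=K$) shows that $\pgauge$ is nothing but the support function of $K$, namely
\[
\pgauge(v) = \max\{\pscal{v}{\eta};\ \eta\in K\}, \qquad v\in\R^n.
\]
Two consequences will be used repeatedly. First, for every $w\in K$ and every $v\in\R^n$ one has $\pscal{w}{v}\leq\pgauge(v)$. Second, since $K$ is closed and convex, the converse holds as a characterization: a vector $w$ belongs to $K$ if and only if $\pscal{w}{v}\leq\pgauge(v)$ for \emph{all} $v\in\R^n$. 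Note that, because $\pgauge$ may be asymmetric, testing over all directions $v$ (not merely $\pm v$) is essential. I shall also use freely the equivalence $c_1|\xi|\leq\pgauge(\xi)\leq c_2|\xi|$ recalled after Theorem~\ref{t:sch}.

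For the implication (i)$\Rightarrow$(ii), I would first upgrade (\ref{f:lipur}) to ordinary local Lipschitz continuity: combining $u(x_2)-u(x_1)\leq\pgauge(x_2-x_1)\leq c_2|x_2-x_1|$ with the same inequality read on the reversed segment gives $|u(x_2)-u(x_1)|\leq c_2|x_2-x_1|$. Hence $u\in\Winf$ (with $|Du|\leq c_2$ a.e.) and, by Rademacher's theorem, $u$ is differentiable at a.e.\ $x\in\mathcal{O}$. At such a point, fixing a direction $v$ and applying (\ref{f:lipur}) to $x_1=x$, $x_2=x+tv$ with $t>0$ small yields, after dividing by $t$ and using the positive $1$-homogeneity of $\pgauge$,
\[
\pscal{Du(x)}{v}\leq\pgauge(v).
\]
Since this holds for every $v\in\R^n$, the support-function characterization above forces $Du(x)\in K$, which is exactly (ii).

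For the converse (ii)$\Rightarrow$(i), the delicate point is that the chain rule $\tfrac{d}{dt}u(\gamma(t))=\pscal{Du(\gamma(t))}{x_2-x_1}$ along a \emph{prescribed} segment $\gamma(t)=x_1+t(x_2-x_1)$ is only guaranteed for almost every line of a given direction (the ACL property), not for the one segment we are handed; this is the main obstacle. I would circumvent it by mollification. Given $\cray{x_1,x_2}\subset\mathcal{O}$, which is compact, choose a neighborhood of it contained in $\mathcal{O}$ and set $u_\epsilon=u*\eta_\epsilon$ there for $\epsilon$ small. Then $Du_\epsilon(x)=\int Du(x-y)\,\eta_\epsilon(y)\,dy$ is an average of points of $K$, so convexity of $K$ gives $Du_\epsilon(x)\in K$. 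For the smooth function $u_\epsilon$ the fundamental theorem of calculus applies on the segment, and
\[
u_\epsilon(x_2)-u_\epsilon(x_1)=\int_0^1\pscal{Du_\epsilon(\gamma(t))}{x_2-x_1}\,dt\leq\int_0^1\pgauge(x_2-x_1)\,dt=\pgauge(x_2-x_1),
\]
where the inequality is the first consequence recorded above applied to $w=Du_\epsilon(\gamma(t))\in K$. Finally, since $u\in\Winf$ has a continuous representative, $u_\epsilon\to u$ uniformly on the segment as $\epsilon\to0$, and passing to the limit recovers (\ref{f:lipur}). This closes the equivalence.
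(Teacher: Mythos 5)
Your proof is correct. The implication (i)$\Rightarrow$(ii) is essentially the paper's own argument in different clothing: the paper deduces $\pscal{Du(x_0)}{\xi}\leq 1$ for all $\xi\in K^0$ and concludes $Du(x_0)\in(K^0)^0=K$, while you deduce $\pscal{Du(x)}{v}\leq\pgauge(v)$ for all $v$ and invoke the fact that $\pgauge$ is the support function of the closed convex set $K$; these are the same polarity stated two ways. The genuine divergence is in (ii)$\Rightarrow$(i). You and the paper both correctly identify the obstacle --- the chain rule is not available along the one prescribed segment --- but you resolve it by mollification: $Du_\epsilon=Du*\eta_\epsilon$ is an average of values in $K$, hence lies in $K$ by convexity and closedness, the fundamental theorem of calculus applies to the smooth $u_\epsilon$ on the segment, and local uniform convergence $u_\epsilon\to u$ passes the inequality to the limit. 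The paper instead works directly with the ACL representative: it encloses $\cray{x_1,x_2}$ in a thin cylinder $Q_r\subset\mathcal{O}$, observes that $u$ is absolutely continuous with $Du\in K$ along almost every translated segment $\cray{x_1+z,x_2+z}$, integrates along a sequence of such good segments with $z_j\to 0$, and concludes by continuity of $u$. The two devices are interchangeable here; yours yields a single clean computation on the given segment at the price of the smoothing identities, the paper's avoids smoothing but must approximate the segment by its parallel translates. (Both your proof and the paper's pass silently over the global boundedness of $u$ itself, which membership in $W^{1,\infty}(\mathcal{O})$ literally requires and which can fail for pathological $\mathcal{O}$; since this imprecision is inherited from the statement and shared with the published proof, it is not a defect of your argument.)
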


\begin{proof}
Let $u$ be a locally 1-Lipschitz function in $\mathcal{O}$
with respect to the metric $\pgauge$.
Then $u$ is locally Lipschitz in $\mathcal{O}$,
and if $x_0$ is a differentiability point of $u$, by (\ref{f:lipur}) we have
\[
\begin{split}
o(|t|) & =\frac{u(x_0+t\xi)-u(x_0)-t \pscal{Du(x_0)}{\xi}}{t} \leq \pgauge(\xi)-\pscal{Du(x_0)}{\xi} \\
& \leq 1-\pscal{Du(x_0)}{\xi}\,, \qquad \forall \xi \in K^0.
\end{split}
\]
Hence $\pscal{Du(x_0)}{\xi}\leq 1 $ for all $\xi \in K^0$, that is $Du(x_0)\in (K^0)^0=K$.
Since $u$ is a locally Lipschitz function in $\mathcal{O}$ with gradient
in the bounded set $K$, (ii) holds.

Conversely, let $u$ satisfy (ii), and let
$\cray{x_1,x_2}\subset\mathcal{O}$.
Clearly, we can assume $x_1\neq x_2$.
Let $S_r := \{z\in B_r(0):\ \pscal{z-x_1}{x_2-x_1} = 0\}$, and let
$Q_r := \left\{\bigcup \cray{x_1+z, x_2+z}:\ z\in S_r\right\}$
be the cylinder of radius $r$ around the segment $\cray{x_1,x_2}$.
Let us choose $r>0$ such that $Q_r\subset \Omega$.
By (ii) there exists a set $N\subset S_r$ with $(n-1)$-Lebesgue measure zero,
such that
for every $z\in S_r\setminus N$,
$u$ is absolutely continuous along
the segment $\cray{x_1+z, x_2+z}$,
and
$Du(x_1+z+s(x_2-x_1))\in K$ for a.e.\ $s\in [0,1]$.
Let $(z_j)\subset S_r\setminus N$, $z_j\to 0$. Then, for every $j\in\N$,
\[
u(x_2+z_j) - u(x_1+z_j) = \int_0^1 \pscal{Du(x_1+z_j +s(x_2-x_1))}{x_2-x_1}\, ds
\leq \pgauge(x_2-x_1).
\]
The conclusion now follows from the continuity of $u$ in $\mathcal{O}$, passing to the limit in $j$.
\end{proof}

In the sequel we need to consider locally 1-Lipschitz functions with respect to $\pgauge$ that agree with
a given continuous boundary datum. This can be done only if the boundary values are compatible with
the gradient bounds, and, in absence of assumptions on the regularity of the reference set,
the compatibility condition concerns the variations of the data along every admissible path.

Here we introduce some notation for the curves, while the compatibility condition will be discussed in Section~\ref{s:extens}.

\begin{dhef}\label{d:gamma}
Let $D\subset\R^n$ be a nonempty compact set.
For every $x_1, x_2\in D$,
$\curveD{x_1, x_2}$ will denote the (possibly empty)
family of absolutely continuous curves $\gamma\colon [0,1]\to D$
such that $\gamma(0) = x_1$ and $\gamma(1) = x_2$.
\end{dhef}

For every absolutely continuous curve $\gamma\colon [0,1]\to \R^n$, let us denote by
$\len{\gamma}$ its length with respect to the convex metric associated to $\pgauge$, that is
\[
\len{\gamma} := \int_0^1 \pgauge(\gamma'(t))\, dt\,.
\]

If $D$ is a compact subset of $\R^n$,
by a standard compactness argument we have that
if $\curveD{x_1,x_2}\neq\emptyset$, then
there exists a geodesic in $D$
joining $x_1$ to $x_2$, i.e.\ there exists a curve $\tilde{\gamma}\in \curveD{x_1, x_2}$
such that
$\len{\tilde{\gamma}}\leq \len{\gamma}$ for every $\gamma\in \curveD{x_1, x_2}$
(see e.g.\ \cite[Thm.~4.3.2]{AmTi}, \cite[\S14.1]{Ces}).

For sake of completeness we briefly recall that any geodesic curve lying in the interior of $D$
is in fact a segment, due to strict convexity of $\pgauge$.

\begin{lemma}\label{l:geoint}
Let $D\subset\R^n$ be a nonempty compact set.
Let $x_1, x_2\in D$ be two points such that $\curveD{x_1, x_2}\neq\emptyset$,
and let $\gamma\in\curveD{x_1, x_2}$ be a geodesic.
If $\gamma(t)\in\inte D$ for every $t\in (0,1)$, then the support of $\gamma$ is the
segment $\cray{x_1, x_2}$.
\end{lemma}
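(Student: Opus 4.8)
The plan is to prove the claim by a local perturbation argument: if the support of the geodesic $\gamma$ were not the straight segment $\cray{x_1,x_2}$, then its length could be strictly decreased by replacing a suitable sub-arc with a chord, contradicting minimality. First I would observe that since $\gamma(t)\in\inte D$ for every $t\in(0,1)$ and $D$ is compact, the image $\gamma([t_0,t_1])$ of any closed sub-interval with $0<t_0<t_1<1$ is a compact subset of the open set $\inte D$, hence has positive distance from $\partial D$. Consequently there is room to replace a small arc by a segment without leaving $D$.

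The key step is a convexity argument using Theorem~\ref{t:sch}(ii). Suppose, for contradiction, that the support of $\gamma$ is not the segment $\cray{x_1,x_2}$. Then I can find parameters $0<t_0<t_1<1$ and an intermediate value so that three points $a=\gamma(t_0)$, $b=\gamma(t_1)$ and an intermediate point of the arc between them are not collinear, so that the arc of $\gamma$ from $a$ to $b$ is not contained in the segment $\cray{a,b}$. Using the equality case in the triangle inequality for $\pgauge$ (Theorem~\ref{t:sch}(ii)), I would show that
\[
\len{\gamma|_{[t_0,t_1]}} = \int_{t_0}^{t_1}\pgauge(\gamma'(t))\,dt \geq \pgauge(b-a),
\]
with strict inequality precisely when the increments $\gamma'(t)$ are not almost everywhere nonnegative multiples of a single fixed direction, i.e.\ when the arc is not a (monotone) reparametrization of the segment $\cray{a,b}$. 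The direction of the strict inequality is guaranteed by the characterization of when additivity holds in the triangle inequality: equality in $\pgauge(\sum \xi_i) \le \sum \pgauge(\xi_i)$ forces all the $\xi_i$ to be aligned, and strict convexity of $K^0$ is exactly what yields strictness otherwise.

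Having obtained $\len{\gamma|_{[t_0,t_1]}} > \pgauge(b-a)$, I would construct a competitor $\tilde\gamma\in\curveD{x_1,x_2}$ by replacing the sub-arc $\gamma|_{[t_0,t_1]}$ with the affine parametrization of $\cray{a,b}$; since $a,b\in\inte D$ and the segment $\cray{a,b}$ lies within a small tube around $\gamma([t_0,t_1])\subset\inte D$ (again using compactness and convexity of nearby neighborhoods), for $t_1-t_0$ small enough this competitor stays in $D$ and is admissible. Its length satisfies $\len{\tilde\gamma} = \len{\gamma} - \len{\gamma|_{[t_0,t_1]}} + \pgauge(b-a) < \len{\gamma}$, contradicting the assumption that $\gamma$ is a geodesic. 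The main obstacle I anticipate is the final feasibility check: ensuring that the chord $\cray{a,b}$ genuinely remains inside $D$. Since $D$ need not be convex, I cannot simply take $a=x_1$, $b=x_2$; the argument must be kept local, working on a sub-arc short enough that the whole segment $\cray{a,b}$ fits inside the positive-distance tube around $\gamma([t_0,t_1])\subset\inte D$. Making this localization precise, and verifying that a non-segment geodesic necessarily admits such a ``bad'' sub-arc, is the delicate point; the rest follows from the strict convexity encoded in Theorem~\ref{t:sch}.
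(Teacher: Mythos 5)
Your strategy --- cut out a short sub-arc and replace it by its chord --- is genuinely different from the paper's, and it can be completed, but as written it has a real gap exactly at the point you yourself flag as ``delicate''. The issue is the localization: you must produce a sub-arc $\gamma|_{[t_0,t_1]}$ that \emph{simultaneously} (a) has diameter small enough that its chord stays inside $\inte D$, and (b) satisfies the strict inequality $\len{\gamma|_{[t_0,t_1]}}>\pgauge(\gamma(t_1)-\gamma(t_0))$. It is not automatic that ``support $\neq\cray{x_1,x_2}$'' produces such an arc at arbitrarily small scales; you would have to argue by contraposition, e.g.: if every sufficiently short sub-arc had length equal to its chord, then splitting such an arc at an interior time $s$ and invoking the equality case of Theorem~\ref{t:sch}(ii) forces $\gamma(s)-\gamma(t_0)$ and $\gamma(t_1)-\gamma(s)$ to be nonnegatively proportional, and chaining these identities over a partition of $[0,1]$ shows that $\gamma$ is a monotone parametrization of $\cray{x_1,x_2}$, contrary to hypothesis. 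There is also a boundary issue you gloss over: near $t=0,1$ the arc may approach $\partial D$, so the ``positive distance to $\partial D$'' you invoke is not uniform on $(0,1)$; you need first to reduce to the case $x_1,x_2\in\inte D$ (by applying the statement to $\gamma|_{[\delta,1-\delta]}$, whose restriction is again a geodesic, and letting $\delta\to 0$), which is exactly the preliminary reduction the paper makes.

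The paper's proof avoids the localization entirely by a \emph{global} perturbation: after reducing to $x_1,x_2\in\inte D$, it sets $\eta(t)=x_1+t(x_2-x_1)$ and considers $\gamma_\varepsilon=\gamma+\varepsilon(\eta-\gamma)$. For small $\varepsilon$ this competitor is uniformly close to $\gamma$, hence stays in $\inte D$ even though the chord $\eta$ itself may leave $D$; convexity of $\pgauge$ then gives $\len{\gamma_\varepsilon}\le(1-\varepsilon)\len{\gamma}+\varepsilon\len{\eta}<\len{\gamma}$, using only the single global strict inequality $\len{\gamma}>\pgauge(x_2-x_1)$ --- a fact both proofs need and which follows from strict convexity of $K^0$, as you correctly identify. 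If you carry out the contraposition argument sketched above, your proof closes; but the convex-interpolation trick buys you precisely the freedom not to worry about where, and at what scale, the geodesic fails to be straight.
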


\begin{proof}
By an approximation argument we can assume that $x_1, x_2\in\inte{D}$ so that
$\gamma(t)\in \inte{D}$ for every $t\in [0,1]$.
Let $\eta(t) = x_1 + t(x_2-x_1)$, $t\in [0,1]$, be a parametrization of the segment
$\cray{x_1, x_2}$.
Assume by contradiction that the support of $\gamma$ is not the segment $\cray{x_1, x_2}$.
Since $K^0$ is strictly convex we have that
$\len{\gamma} > \len{\eta} = \pgauge(x_2-x_1)$.
Since the support of $\gamma$ is contained in $\inte{D}$, there exists
$\varepsilon \in (0,1)$ such that the support of
$\gamma_{\varepsilon} := \gamma + \varepsilon(\eta-\gamma)$
is contained in $\inte{D}$.
Then
\[
\begin{split}
\len{\gamma_{\varepsilon}} & = \int_0^1 \pgauge((1-\varepsilon)\gamma' + \varepsilon\eta')
\leq \int_0^1 [(1-\varepsilon)\pgauge(\gamma') + \varepsilon\pgauge(\eta')]
\\ & = (1-\varepsilon)\len{\gamma} + \varepsilon\len{\eta} < \len{\gamma},
\end{split}
\]
in contradiction with the minimality of $\gamma$.
\end{proof}

\section{Lipschitz extensions}\label{s:extens}

In this section, $C$ and $D$ will be two sets such that
\begin{equation}\label{f:CD}
\begin{split}
& \text{$D$ is a nonempty compact subset of $\R^n$, and}\\
& \text{$C$ is a closed subset of $D$ containing $\partial D$,}
\end{split}
\end{equation}
so that $D\setminus C$ is an open (possibly empty) set.

For a given continuous function $\psi\colon C\to \R$, we want to discuss the existence of a
locally Lipschitz extension of $\psi$ in $D$ with the gradient constrained to belong to a
convex set $K$.

It is well known that, in the case $D=\overline{\Omega}$, $\Omega$ bounded open subset of $\R^n$,
$C=\partial \Omega$, and $K$ satisfying (\ref{f:hypk}), a necessary and sufficient condition
for the existence of such an extension is
\[
\psi(x) - \psi(y) \leq \len{\gamma}\qquad
\forall x,y\in \partial\Omega,\ \forall \gamma\in\Gamma^\Omega_{y,x}
\]
where $\Gamma^\Omega_{y,x}$ is the family of absolutely continuous curves $\gamma\colon [0,1]\to \overline{\Omega}$
such that $\gamma(t)\in\Omega$ for every $t\in(0,1)$, $\gamma(0) = y$ and $\gamma(1) = x$
(see \cite[Sect.~5]{Li} and
\cite{Ar-b}). Moreover, if $\Omega$ has a Lipschitz boundary,
one can relax the above necessary and sufficient condition by taking paths in $\Gamma^{\overline{\Omega}}_{y,x}$
(see \cite{Li}).

In order to handle more general situations, we will consider
a continuous function $\psi\colon C\to \R$ satisfying
\begin{equation}\label{f:H4}
\psi(x) - \psi(y) \leq \len{\gamma}\qquad
\forall x,y\in C,\ \forall \gamma\in\curveD{y, x}\,,
\end{equation}
where $\curveD{x_1,x_2}$ is the family of curves introduced in Definition~\ref{d:gamma}.

This condition can be rephrased in the setting of the length space $(D, d_L)$,
where $d_L\colon D\times D \to [0,+\infty]$ is the (possibly asymmetric) distance defined by
\begin{equation}\label{f:dL}
d_L(x,y) := \inf\{\len{\gamma}:\ \gamma\in\curveD{y, x}\},
\qquad x,y\in D.
\end{equation}
(We refer to \cite[Chap.~2]{BuBuIv} for the basic properties of length spaces; see in particular
Remark~2.2.6 for the extension to possibly asymmetric metrics.)
Namely, (\ref{f:H4}) is equivalent to requiring that
$\psi$ is a $1$-Lipschitz function with respect to the $d_L$.

In the following lemma we recall the well known fact that
(\ref{f:H4}) turns out to be a sufficient condition for the existence
of minimal and maximal $1$-Lipschitz extensions (w.r.t. $d_L$)  of $\psi$ . Notice that (\ref{f:H4})
is no more a necessary condition, as it can be seen when $D$ is a closed segment in
$\R^2$.

\begin{lemma}\label{l:genprop}
Let $C$, $D$ satisfy (\ref{f:CD}),
let $K$ satisfy (\ref{f:hypk}),
and let
$\psi\colon C\to\R$ be a continuous function satisfying (\ref{f:H4}).
Then the functions
\begin{equation}\label{f:psi}
\begin{split}
\up(x) & := \inf\left\{\psi(y)+L(\gamma):\ y\in C,\ \gamma\in\curveD{y,x}
\right\},\quad
x\in D,\\
\wp(x) & := \sup\left\{\psi(y)-L(\gamma):\ y\in C,\ \gamma\in\curveD{x,y}
\right\},\quad
x\in D,
\end{split}
\end{equation}
are continuous in $D$, and $\up=\wp=\psi$ in $C$.
Moreover, $\up, \wp\in W^{1,\infty}(\inte D)$
and $D\up, D\wp\in K$ a.e.\ in $\inte D$.
\end{lemma}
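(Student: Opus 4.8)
The plan is to rewrite the two extensions as infimal/supremal convolutions of $\psi$ with the length distance $d_L$ of (\ref{f:dL}), and then to extract everything from (\ref{f:H4}), the compactness of $C$, and Lemma~\ref{l:diseqrho}. Throughout I write $c_1,c_2>0$ for the constants with $c_1|\xi|\le\pgauge(\xi)\le c_2|\xi|$. I treat $\up$ in detail; the statements for $\wp$ follow by applying the same reasoning to $-\psi$ and to the reversed norm $\xi\mapsto\pgauge(-\xi)$, whose unit ball $-K^0$ again satisfies the hypotheses of Theorem~\ref{t:sch} (one checks directly that the resulting maximal extension equals $-\wp$).

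First I would record that, minimizing over the curves in (\ref{f:psi}) as in (\ref{f:dL}),
\[
\up(x)=\inf_{y\in C}\bigl[\psi(y)+d_L(x,y)\bigr],\qquad x\in D .
\]
The identity $\up=\psi$ on $C$ is then immediate: taking $y=x$ and the constant curve gives $\up(x)\le\psi(x)$, while for every $y\in C$ and $\gamma\in\curveD{y,x}$ condition (\ref{f:H4}) yields $\psi(y)+\len{\gamma}\ge\psi(x)$, whence $\up(x)\ge\psi(x)$. Next I would prove the fundamental estimate $\up(x')\le\up(x)+d_L(x',x)$ for all $x,x'\in D$, obtained by concatenating any competitor curve for $\up(x)$ with a curve from $x$ to $x'$ (lengths add). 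In particular, whenever the segment $\cray{x,x'}$ lies in $\inte D$ one may use that segment as connector and get $\up(x')-\up(x)\le\pgauge(x'-x)$, i.e.\ $\up$ satisfies (\ref{f:lipur}) in $\inte D$. By Lemma~\ref{l:diseqrho} this gives at once $\up\in\Winf(\inte D)$ with $D\up\in K$ a.e.\ in $\inte D$.

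The real content is the continuity on all of $D$, since the intrinsic distance $d_L$ need not induce the Euclidean topology near $\partial D$. I would obtain it as the combination of upper and lower semicontinuity, the continuity of $\psi$ on the compact set $C$ being decisive in both. For upper semicontinuity at $x_0\in D$: if $x\in\inte D$, follow the segment from $x$ toward $x_0$ up to its first hitting point $y\in\partial D\subseteq C$; then $\cray{x,y}\subset D$, $|y-x_0|\le|x-x_0|$ and $d_L(x,y)\le\pgauge(x-y)\le c_2|x-x_0|$, so that $\up(x)\le\psi(y)+c_2|x-x_0|$, while points $x\in\partial D$ already satisfy $\up(x)=\psi(x)$; letting $x\to x_0$ and using the continuity of $\psi$ gives $\limsup_{x\to x_0}\up(x)\le\psi(x_0)=\up(x_0)$. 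For lower semicontinuity I would first note that the length functional is lower semicontinuous under uniform convergence, so that $d_L$ is jointly lower semicontinuous on $D\times D$ by an Arzelà--Ascoli argument applied to constant-speed reparametrizations of curves of bounded length. Then, writing $\up(x_k)=\psi(y_k)+d_L(x_k,y_k)$ for $x_k\to x_0$ and $y_k\in C$ (the infimum is attained, as $C$ is compact and $\psi(\cdot)+d_L(x_k,\cdot)$ is lower semicontinuous), a subsequence has $y_k\to y^\ast\in C$ and
\[
\liminf_{k}\up(x_k)\ge\psi(y^\ast)+d_L(x_0,y^\ast)\ge\up(x_0).
\]

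The step I expect to be the main obstacle is precisely this boundary continuity: the naive bound $\up(x)\ge\psi(x_0)-d_L(x_0,x)$ is useless, because $d_L(x_0,x)$ may stay bounded away from $0$ as $x\to x_0$ (think of a set whose sheets accumulate near a boundary point). The device that circumvents it is, for the upper bound, to reach $x$ from a boundary point $y$ that is genuinely Euclidean-close to $x_0$ — so that $\psi(y)$ is close to $\psi(x_0)$ — rather than from $x_0$ itself; and, for the lower bound, to substitute for the missing Euclidean continuity of $d_L$ its lower semicontinuity together with the compactness of $C$. Once $\up$ is settled, the assertions for $\wp$ follow by the symmetry recorded above, which completes the proof.
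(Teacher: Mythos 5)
Your argument is sound, but there is nothing in the paper to compare it with: Lemma~\ref{l:genprop} is stated without proof, presented as a recollection of a well-known extension result (with pointers to Lions and Aronsson for the case $D=\overline\Omega$, $C=\partial\Omega$, and with the sharper representation deferred to Lemma~\ref{l:vischar}, quoted from Bianchini). Your write-up is therefore a self-contained substitute rather than an alternative to an existing argument. All three of your main steps are correct: the inf-convolution rewriting together with (\ref{f:H4}) gives $\up=\psi$ on $C$; segments contained in $\inte D$ give (\ref{f:lipur}) there, whence Lemma~\ref{l:diseqrho} yields the $W^{1,\infty}$ and gradient-constraint claims; and the reduction of $\wp$ to $\up$ via $-\psi$ and the reversed gauge $\xi\mapsto\pgauge(-\xi)$ (the gauge of $-K^0$, polar of $-K$) is legitimate. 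Your Arzel\`a--Ascoli argument for the joint lower semicontinuity of $d_L$ is also fine and matches the compactness argument the paper itself invokes for the existence of geodesics; note, however, that it is heavier than necessary: the same first-hitting-point device you use for the upper bound also gives the lower bound, since concatenating any competitor $\gamma\in\curveD{y,x}$ with the segment from $x$ to the first point $y'$ of $C$ on $\cray{x,x_0}$ and applying (\ref{f:H4}) to the pair $(y',y)$ yields $\up(x)\ge\psi(y')-\pgauge(y'-x)$ with $|y'-x_0|\le|x-x_0|$.

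Two points should be made explicit before the proof is complete. First, you never verify that $\up(x)>-\infty$ for $x\in D\setminus C$; without this neither the $W^{1,\infty}$ membership nor the semicontinuity discussion is meaningful. It follows from the concatenation trick just described: since $D\setminus C$ is open and bounded, every ray from $x$ meets $C$ at a first point $y_0$ with $\cray{x,y_0}\subseteq D$, and (\ref{f:H4}) then gives $\psi(y)+L(\gamma)\ge\psi(y_0)-\pgauge(y_0-x)$ for every competitor. Second, your upper-semicontinuity argument tacitly assumes $x_0\in\partial D$ (otherwise the segment from $x$ to $x_0$ need not meet $\partial D$ and the ``first hitting point'' does not exist); you should state that for $x_0\in\inte D$ continuity is already contained in the interior Lipschitz estimate, so that only $x_0\in\partial D\subseteq C$ remains to be treated. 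With these additions the proof is complete and correct.
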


As a consequence of Lemma~\ref{l:genprop},
the functions $\up$ and $\wp$ belong to the functional space
\[
Z_\psi :=\left\{u\in C(D)\cap W^{1,\infty}(\inte D):\
u=\psi\ \text{on}\ C,\ Du\in K\ \text{a.e.\ in}\ \inte D \right\}.
\]
It should be noted that in the definition of $\up(x)$ the infimum is taken over all paths
in $D$ joining $y\in C$ to $x$, while the supremum in the definition of $\wp$ is taken
over all paths in $D$ joining $x$ to $y\in C$.
This asymmetry is needed in order to construct exactly the
maximal and the minimal elements of $Z_\psi$, as it is stated in Lemma \ref{l:vischar} below
(see \cite[Prop.~2.1]{Bi}).

\begin{dhef}\label{d:visible}
Given $x\in D$, we shall say that $y\in C$ is a visible point for $x$ if the segment $\oray{y,x}$
is entirely contained in the open set $D\setminus C$. We shall denote by $V^C_x$ the set of all
visible points for $x$, that is $V_x^C = \{y\in C:\ \oray{y,x}\subset D\setminus C\}$.
\end{dhef}

\begin{lemma}\label{l:vischar}
The functions $\up$, $\wp$ defined in (\ref{f:psi}) are characterized in the following way:
\begin{equation}\label{f:LH3}
\begin{split}
\up(x) & = \min\{\psi(y) + \pgauge(x-y):\ y\in V_x^C\}, \qquad x\in D,\\
\wp(x) & = \max\{\psi(y) - \pgauge(y-x):\ y\in V_x^C\}, \qquad x\in D.
\end{split}
\end{equation}
Moreover $\wp\leq u\leq \up$ for every $u\in Z_\psi$.
\end{lemma}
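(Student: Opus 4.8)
The plan is to rewrite the variational definitions \eqref{f:psi} as distance-from-$C$ formulas and then use the strict convexity of $\pgauge$, through Lemma~\ref{l:geoint}, to replace geodesics by straight segments issuing from visible points. Since $\len{\cdot}$ is additive under concatenation of curves and $d_L(y,x)=\inf\{\len{\gamma}:\gamma\in\curveD{y,x}\}$ as in \eqref{f:dL}, one has $\up(x)=\inf_{y\in C}[\psi(y)+d_L(y,x)]$ and, symmetrically, $\wp(x)=\sup_{y\in C}[\psi(y)-d_L(x,y)]$. I will carry out the argument for $\up$; the claim for $\wp$ follows by reversing the orientation of all paths involved. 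Throughout I may assume $x\in D\setminus C$, the case $x\in C$ being immediate since then $x\in V_x^C$ and $\psi=\up=\wp$ on $C$ by Lemma~\ref{l:genprop}.

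First I would prove that $\up(x)\le\psi(y)+\pgauge(x-y)$ for every $y\in V_x^C$: for such $y$ the open segment $\oray{y,x}$ lies in $D\setminus C$, so the constant-speed parametrization of $\cray{y,x}$ belongs to $\curveD{y,x}$ and has length $\pgauge(x-y)$, which gives the bound by definition of $\up$. For the reverse inequality, fix $y\in C$ with $\curveD{y,x}\neq\emptyset$ and pick a geodesic $\gamma\in\curveD{y,x}$, which exists by the compactness argument recalled before Lemma~\ref{l:geoint}. Let $t^*:=\sup\{t\in[0,1]:\gamma(t)\in C\}$ and $z:=\gamma(t^*)\in C$. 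The key step is that $z\in V_x^C$: for $t\in(t^*,1)$ one has $\gamma(t)\in D\setminus C\subseteq\inte D$, so the terminal subarc $\gamma|_{[t^*,1]}$, itself a geodesic joining $z$ to $x$ whose interior lies in $\inte D$, must by Lemma~\ref{l:geoint} be the segment $\cray{z,x}$; hence $\oray{z,x}\subset D\setminus C$, i.e.\ $z$ is visible, and $\len{\gamma|_{[t^*,1]}}=\pgauge(x-z)$. Since $\gamma|_{[0,t^*]}\in\curveD{y,z}$, condition \eqref{f:H4} gives $\psi(z)-\psi(y)\le\len{\gamma|_{[0,t^*]}}$, whence
\[
\psi(y)+d_L(y,x)=\psi(y)+\len{\gamma|_{[0,t^*]}}+\len{\gamma|_{[t^*,1]}}\ge\psi(z)+\pgauge(x-z).
\]
Taking the infimum over $y\in C$ yields $\up(x)\ge\inf_{w\in V_x^C}[\psi(w)+\pgauge(x-w)]$, which together with the previous bound proves the first line of \eqref{f:LH3}. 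Attainment of the minimum follows from the compactness of $C$ and the lower semicontinuity of $y\mapsto d_L(y,x)$ (via Arzel\`a--Ascoli applied to geodesics, using the equivalence of $\pgauge$ with the Euclidean norm), the optimal value being realized by the visible point $z$ produced above from a minimizing $y$.

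For the extremality $\wp\le u\le\up$ with $u\in Z_\psi$, I would argue as follows. Given $x\in D\setminus C$ and $y\in V_x^C$, the open segment $\oray{y,x}$ lies in $\inte D$, where $Du\in K$ a.e.; by Lemma~\ref{l:diseqrho} (ii)$\Rightarrow$(i), $u(x)-u(y')\le\pgauge(x-y')$ for every $y'\in\oray{y,x}$, since $\cray{y',x}\subset\inte D$. Letting $y'\to y$ along the segment and using the continuity of $u$ on $D$ together with $u=\psi$ on $C$ gives $u(x)-\psi(y)\le\pgauge(x-y)$; minimizing over $y\in V_x^C$ and invoking the characterization \eqref{f:LH3} yields $u(x)\le\up(x)$. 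The reverse bound $u(x)\ge\wp(x)$ is obtained identically from $\psi(y)-u(x)\le\pgauge(y-x)$, applying Lemma~\ref{l:diseqrho} to the segment $\cray{x,y'}$. Since $\up,\wp\in Z_\psi$ by Lemma~\ref{l:genprop}, they are respectively the maximal and the minimal elements of $Z_\psi$.

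The main obstacle is the identification of the last point $z$ at which a geodesic meets $C$ as a visible point: this is exactly where strict convexity of $\pgauge$ enters, through Lemma~\ref{l:geoint}, forcing the terminal subarc — whose interior avoids $C$ and hence lies in $\inte D$ — to be the straight segment $\cray{z,x}$. Once this is in place, the replacement of an arbitrary admissible path by a segment from a visible point, combined with the additivity of $\len{\cdot}$ and \eqref{f:H4}, makes both the characterization and the extremality essentially formal. A minor point to verify with care is that $V_x^C\neq\emptyset$ for every $x\in D$, which I would obtain for $x\in D\setminus C$ by shooting a ray from $x$ and taking its first exit point from the bounded open set $D\setminus C$.
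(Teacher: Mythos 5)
Your argument is correct. Note that the paper itself does not prove this lemma: it defers to \cite[Prop.~2.1]{Bi}, so there is no in-text proof to compare against; what you have written is the natural self-contained argument, and it uses exactly the tools the paper sets up for this purpose (the geodesic-straightening Lemma~\ref{l:geoint}, the gradient/Lipschitz equivalence of Lemma~\ref{l:diseqrho}, and the compatibility condition (\ref{f:H4})). The heart of the matter is exactly where you locate it: the last point $z$ at which a geodesic from $y\in C$ to $x$ meets $C$ is visible from $x$, because the terminal subarc is itself a geodesic whose interior lies in $D\setminus C\subseteq \inte D$, hence is the segment $\cray{z,x}$ by Lemma~\ref{l:geoint}, and its image over $(t^*,1]$ covers $\oray{z,x}$. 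The only step I would ask you to write out in full is the attainment of the minimum in (\ref{f:LH3}): a limit $\bar y$ of a minimizing sequence $(y_j)\subset V_x^C$ need not itself be visible, so rather than invoking lower semicontinuity of $d_L(\cdot,x)$ it is cleaner to observe that $\cray{\bar y,x}\subseteq \overline{D\setminus C}\cup\{\bar y,x\}\subseteq D$, take $w^*$ to be the point of the closed set $\cray{\bar y,x}\cap C$ nearest to $x$ (so that $w^*\in V_x^C$ by construction), and use (\ref{f:H4}) along the segment $\cray{\bar y,w^*}$ together with the collinear additivity of $\pgauge$ to get $\psi(w^*)+\pgauge(x-w^*)\leq \psi(\bar y)+\pgauge(x-\bar y)=\inf$. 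With that, and the one-line verification that for $x\in C$ the lower bound $\psi(y)+\pgauge(x-y)\geq\psi(x)$ for $y\in V_x^C$ also follows from (\ref{f:H4}), the proof is complete.
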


\begin{cor}\label{c:penduno}
Let $x\in D\setminus C$ be a  point of differentiability for $\up$ (resp.\ $\wp$). Then
$D\up(x)$ (resp.\ $D\wp(x)$) belongs to $\partial K$.
\end{cor}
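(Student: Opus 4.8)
The plan is to show that at a differentiability point $x\in D\setminus C$ of $\up$, the gradient $D\up(x)$ cannot lie in $\inte K$, which combined with the inclusion $D\up(x)\in K$ from Lemma~\ref{l:genprop} forces $D\up(x)\in\partial K$. By Lemma~\ref{l:vischar} there is a visible point $y\in V_x^C$ realizing the minimum, so that $\up(x)=\psi(y)+\pgauge(x-y)$ and $\up(z)\le \psi(y)+\pgauge(z-y)$ for all $z\in D$, with equality at $z=x$. In other words the function $z\mapsto \psi(y)+\pgauge(z-y)$ touches $\up$ from above at $x$. Since $x\notin C$ the visibility condition places the open segment $\oray{y,x}$ inside $D\setminus C$, so a whole neighborhood of $x$ along the ray direction stays in $D$ and the comparison is meaningful near $x$.

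First I would use that $\up$ is differentiable at $x$ and is touched from above by the smooth-away-from-$y$ function $g(z):=\psi(y)+\pgauge(z-y)$, whose gradient at $x$ is $D\pgauge(x-y)$ (here $x\neq y$, so $\pgauge$ is differentiable at $x-y$ away from the origin). A first-order touching argument gives $D\up(x)=D\pgauge(x-y)$. Now I invoke the homogeneity structure: by Theorem~\ref{t:sch}(iii) applied to the gauge $\pgauge$ of $K^0$ (whose polar is $K^{00}=K$), the gradient $D\pgauge(\xi)$ of a $1$-homogeneous gauge always lands on the boundary of the polar body, i.e.\ $D\pgauge(x-y)\in\partial K$. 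Hence $D\up(x)\in\partial K$, as desired.

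The corresponding statement for $\wp$ is entirely symmetric: Lemma~\ref{l:vischar} yields a visible point $y$ with $\wp(z)\ge \psi(y)-\pgauge(y-z)$ and equality at $x$, so now $z\mapsto \psi(y)-\pgauge(y-z)$ touches $\wp$ from below at $x$, giving $D\wp(x)=D_z[-\pgauge(y-z)]|_{z=x}=D\pgauge(y-x)$ (using the chain rule and $1$-homogeneity), which again lies in $\partial K$ by Theorem~\ref{t:sch}(iii).

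The main obstacle is making the touching argument fully rigorous at the level of gradients rather than just one-sided directional derivatives. Because the minimum in Lemma~\ref{l:vischar} is attained only at the specific visible point $y$, the upper bound $g(z)\ge\up(z)$ holds for all admissible $z$ but equality holds only at $x$; one must check that the comparison function $g$ is itself differentiable at $x$ (which it is, since $x\neq y$ and $\pgauge$ is $C^1$ away from the origin by the strict convexity of $K^0$ together with $0\in\inte K^0$) so that the standard lemma ``if $\up\le g$ near $x$ with equality at $x$ and both are differentiable there, then $D\up(x)=Dg(x)$'' applies. I would verify that the ray geometry guarantees $g\ge\up$ on a genuine neighborhood of $x$ in $D$, not merely along the single ray, so that differentiability in all directions is exploited; this is exactly what the openness of $D\setminus C$ and the visibility of $y$ provide.
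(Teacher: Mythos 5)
Your overall strategy (rule out $D\up(x)\in\inte K$ and combine with $D\up(x)\in K$ from Lemma~\ref{l:genprop}) is the right one, but the full-gradient touching argument you build it on has two genuine gaps. First, the comparison function $g(z)=\psi(y)+\pgauge(z-y)$ need not be differentiable at $z=x$: under (\ref{f:hypk}) the body $K$ is of class $C^1$, which makes $K^0$ \emph{strictly convex} but not necessarily of class $C^1$ (equivalently, $K$ need not be strictly convex), and it is smoothness of $K^0$ --- not its strict convexity --- that governs differentiability of its gauge $\pgauge$ away from the origin. The paper stresses exactly this point right after Theorem~\ref{t:sch}, and Remark~\ref{r:exa} exhibits a $K^0$ that is strictly convex but not $C^1$. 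For the same reason you cannot invoke Theorem~\ref{t:sch}(iii) with $K$ replaced by $K^0$: that theorem assumes the body in question is $C^1$. Second, the inequality $\up(z)\le\psi(y)+\pgauge(z-y)$ is only guaranteed for those $z$ for which the segment $\cray{y,z}$ lies in $D$ (so that it is an admissible curve in the infimum defining $\up$); visibility of $y$ from $x$ gives $\oray{y,x}\subset D\setminus C$, but this segment is not compactly contained in the open set $D\setminus C$ (its closure meets $C$ at $y$), so for an irregular $\partial D$ the nearby segments $\cray{y,z}$ can leave $D$ near $y$. Hence the ``touching from above on a genuine neighborhood of $x$'' that your last paragraph promises to verify is in general simply not available.

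The repair --- and the paper's actual argument --- uses only one-dimensional information along the ray. Set $\xi=(x-y)/\pgauge(x-y)$, so $\xi\in\partial K^0\subset K^0$. One shows $\up(x-s\xi)=\up(x)-s$ for $s\in[0,\pgauge(x-y)]$: the upper bound comes from the sub-segment of $\cray{y,x}$ being an admissible curve, and the matching lower bound from $\up$ being locally $1$-Lipschitz w.r.t.\ $\pgauge$ (Lemma~\ref{l:diseqrho}). Differentiability of $\up$ at $x$ then yields $\pscal{D\up(x)}{\xi}=1$, hence $\gauge(D\up(x))=\max_{\eta\in K^0}\pscal{D\up(x)}{\eta}\ge 1$, which together with $D\up(x)\in K$ forces $D\up(x)\in\partial K$. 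This requires no differentiability of $\pgauge$ and no neighborhood comparison, and the argument for $\wp$ is symmetric.
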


\begin{proof}
Given $x\in D\setminus C$, let $y\in V_x^C$ be such that
\[
\up(x)=\psi(y)+\pgauge(x-y)\,.
\]
Setting $\xi=(x-y)/\pgauge(x-y)$, the fact that $\up\in Z_{\psi}$ and Lemma \ref{l:diseqrho} imply that
\[
\up(x-s\xi)-\up(y)  \leq \pgauge (x-y)-s\,, \
\up(x)-\up(x-s\xi)\leq s \quad \forall s\in[0, \pgauge(x-y)]\,.
\]
Then we have
\[
-s \leq \up(x-s\xi)-\up(x)=\up(x-s\xi)-\up(y)-\pgauge(x-y)\leq -s\,,
\]
and hence
\begin{equation}\label{f:linpsi}
\up(x-s\xi)=\up(x)-s,\qquad \forall s\in[0, \pgauge(x-y)]\,.
\end{equation}
If $\up$ is differentiable at $x$, (\ref{f:linpsi}) implies that $\pscal{D\up(x)}{\xi}=1$, so that
\[
\gauge(D\up(x))=\max_{\eta\in K^0}\pscal{D\up(x)}{\eta}\geq 1\,.
\]
On the other hand, by Lemma~\ref{l:genprop},
$D\up(x)\in K$,
hence $D\up(x)$ belongs to $\partial K$.

The proof for $\wp$ is omitted, since it is entirely similar.
\end{proof}

\section{The existence result}
\label{s:existence}

Our main goal is to provide a full analysis of the Monge-Kantorovich system
of PDEs
\begin{equation}\label{f:MK}
\begin{cases}
-\dive(v\, D\gauge(Du)) = f
&\text{in $\Omega$},\\
\gauge(Du)\leq 1,\ v\geq 0
&\text{a.e.\ in $\Omega$},\\
(1-\gauge(Du))v=0 &\text{a.e.\ in $\Omega$},\\
u=\phi &\text{pointwise on $\partial\Omega$}\,,
\end{cases}
\end{equation}
under the following assumptions:
\begin{itemize}
\item[(H1)] $\Omega$ is a nonempty bounded connected open subset of $\R^n$, $n\geq 1$.
\item[(H2)] $f\in L^1(\Omega)$, $f\geq 0$ a.e.\ in $\Omega$.
\item[(H3)] $\gauge\colon \R^n \to \R$ is the gauge function of a set $K\subset\R^n$ satisfying (\ref{f:hypk}).
\item[(H4)] $\phi\colon\partial\Omega\to \R$ is a continuous function, satisfying the condition
\[
\phi(x)-\phi(y) \leq \len{\gamma} := \int_0^1 \pgauge(\gamma'(t))\, dt\qquad \forall x,y\in\partial\Omega,\
\forall \gamma\in \ccurve{y,x},
\]
where
\[
\ccurve{y,x}:= \ccurve{y,x}^{\overline{\Omega}} = \left\{
\gamma\in AC([0,1], \overline{\Omega}):\ \gamma(0) = y,\ \gamma(1) = x
\right\}\,.
\]
\end{itemize}
\begin{rk}
In the first equation in (\ref{f:MK}) we understand $D\gauge$
defined also at the origin since, by the third condition,
$v=0$ a.e.\ where $Du=0$.
\end{rk}
The functional setting for the unknowns $(u,v)$ in (\ref{f:MK}) is $\spaceX \times L^1_+(\Omega)$, where
\begin{equation}\label{f:X}
\spaceX := \left\{
u\in C(\overline{\Omega})\cap W^{1,\infty}(\Omega):\
u=\phi \text{ on } \partial\Omega,\ Du\in K \text{ a.e.~in } \Omega
\right\}
\end{equation}
and
\[
L^1_+(\Omega) := \{v\in L^1(\Omega):\ v\geq 0 \text{ a.e.\ in } \Omega\}\,.
\]

By a solution of (\ref{f:MK})
we mean a pair $(u,v)\in \spaceX\times L^1_+(\Omega)$ satisfying
\begin{equation}\label{f:weak}
\int_{\Omega} v \pscal{D\gauge(Du)}{D\varphi} = \int_{\Omega} f\varphi
\qquad \forall \varphi\in C^{\infty}_0(\Omega)
\end{equation}
and $(1-\gauge(Du))v=0$ a.e.\ in $\Omega$.

Let us define the Lax-Hopf function
$\uo\colon\overline{\Omega}\to\R$ by
\begin{equation}\label{f:LH}
\uo(x) := \inf\left\{\phi(y)+L(\gamma):\ y\in\partial\Omega,\ \gamma\in\ccurve{y,x}
\right\},\quad
x\in \overline{\Omega},
\end{equation}
which corresponds to the function $\up$ defined in (\ref{f:LH3}) in the
special case $D=\overline{\Omega}$,
$C = \partial\Omega$, and
$\psi = \phi$
(observe that $C=\partial \Omega\supseteq\partial \overline{\Omega}=\partial D$
and $\Omega\subseteq\inte D$).

As a consequence of Lemmas~\ref{l:genprop}, \ref{l:vischar}, and Corollary~\ref{c:penduno},
we obtain
the following result, which is well-known when $\Omega$ is a Lipschitz domain
(see e.g.\ \cite[Chap.~5]{Li}).

\begin{teor}\label{t:genprop}
If (H1), (H3) and (H4) are fulfilled,
then $\uo\in\spaceX$ and $D\uo\in\partial K$ a.e.\ in $\Omega$.
Moreover, $\uo$ is the maximal element in $\spaceX$, and it
is characterized by
\begin{equation}\label{f:LH2}
\uo(x) = \min\{\phi(y) + \pgauge(x-y):\ y\in V_x\}, \qquad x\in\overline{\Omega},
\end{equation}
where $V_x = \{y\in\partial\Omega:\ \oray{y,x}\subset\Omega\}$.
\end{teor}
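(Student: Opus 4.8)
The plan is to derive Theorem~\ref{t:genprop} directly from the general results of Section~\ref{s:extens} by specializing them to $D=\overline{\Omega}$, $C=\partial\Omega$, $\psi=\phi$. As the excerpt already observes, with these choices the function $\up$ of Lemma~\ref{l:genprop} is exactly $\uo$, and the compatibility hypothesis (\ref{f:H4}) is precisely (H4). So the first step is to verify that (\ref{f:CD}) holds in this setting: $\overline{\Omega}$ is a nonempty compact set by (H1), and $\partial\Omega$ is a closed subset of $\overline{\Omega}$ containing $\partial\overline{\Omega}$. Once (\ref{f:CD}) and (\ref{f:H4}) are checked, Lemma~\ref{l:genprop} immediately gives that $\uo$ is continuous on $\overline{\Omega}$, equals $\phi$ on $\partial\Omega$, and satisfies $\uo\in W^{1,\infty}(\Omega)$ with $D\uo\in K$ a.e.\ in $\Omega=\inte\overline{\Omega}$. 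Together with the boundary condition, this is exactly the statement $\uo\in\spaceX$, comparing the definition of $\spaceX$ in (\ref{f:X}) with that of $Z_\psi$.

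\medskip
\emph{Next} I would obtain the three remaining assertions. The characterization (\ref{f:LH2}) is a direct transcription of the first line of (\ref{f:LH3}) in Lemma~\ref{l:vischar}, noting that the visible set $V^C_x$ of Definition~\ref{d:visible} becomes $V_x=\{y\in\partial\Omega:\ \oray{y,x}\subset\Omega\}$ under the present identification, since $D\setminus C=\overline{\Omega}\setminus\partial\Omega=\Omega$. For the maximality, the inequality $\wp\le u\le\up$ from Lemma~\ref{l:vischar} specializes to $u\le\uo$ for every $u\in Z_\psi=\spaceX$, which is precisely the assertion that $\uo$ is the maximal element of $\spaceX$. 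Finally, the claim $D\uo\in\partial K$ a.e.\ in $\Omega$ follows from Corollary~\ref{c:penduno}: at every differentiability point $x\in\Omega=D\setminus C$ of $\uo=\up$ one has $D\uo(x)\in\partial K$, and since $\uo\in W^{1,\infty}(\Omega)$ is differentiable a.e.\ by Rademacher's theorem, this holds for a.e.\ $x\in\Omega$.

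\medskip
\textbf{The main point to be careful about} is the matching of the geometric hypotheses, not any hard analysis. Specifically, one must confirm that the inclusions flagged in the parenthetical remark before the theorem, namely $C=\partial\Omega\supseteq\partial\overline{\Omega}=\partial D$ and $\Omega\subseteq\inte D$, indeed hold so that (\ref{f:CD}) is satisfied and so that $\inte D$ in Lemma~\ref{l:genprop} coincides with the set $\Omega$ appearing in (\ref{f:X}). The subtlety is that for a general bounded open $\Omega$ one has $\inte\overline{\Omega}\supseteq\Omega$ but possibly with strict inclusion; however, since the gradient bound in Lemma~\ref{l:genprop} holds on all of $\inte\overline{\Omega}\supseteq\Omega$ and the boundary condition is imposed on $\partial\Omega=C$, the restriction of the conclusions to $\Omega$ is automatic and no regularity of $\partial\Omega$ is needed. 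This is exactly the advantage of the abstract formulation of Section~\ref{s:extens}: it bypasses the Lipschitz-domain assumption used in the classical references. Hence the proof is essentially a bookkeeping specialization, and the only real work was already done in establishing Lemma~\ref{l:genprop}, Lemma~\ref{l:vischar}, and Corollary~\ref{c:penduno}.
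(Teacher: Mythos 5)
Your proposal is correct and follows exactly the route the paper takes: the paper states Theorem~\ref{t:genprop} precisely ``as a consequence of Lemmas~\ref{l:genprop}, \ref{l:vischar}, and Corollary~\ref{c:penduno}'' under the specialization $D=\overline{\Omega}$, $C=\partial\Omega$, $\psi=\phi$, which is what you carry out, including the same parenthetical checks of (\ref{f:CD}) and the identification $V_x^C=V_x$. The only point you gloss (as does the paper) is the literal identification $Z_\psi=\spaceX$ when $\Omega\subsetneq\inte\overline{\Omega}$; for the maximality over all of $\spaceX$ one can, if desired, argue directly from (\ref{f:LH2}) and Lemma~\ref{l:diseqrho} along the segment $\oray{y,x}\subset\Omega$, so nothing essential is missing.
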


If $\phi=0$, the Lax--Hopf function $u_0$
is the distance from $\partial\Omega$ with respect to
the convex metric associated to $\pgauge$,
that is
$u_0(x) = \min\{\pgauge(x-y):\ y\in\partial\Omega\}$, $x\in \Omega$
(see \cite{CMf} for a complete treatment of this subject).

More generally, if
$\phi\colon\partial\Omega\to\R$ satisfies
\begin{equation}\label{f:onelip}
\phi(x)-\phi(y)\leq\pgauge(x-y)
\qquad\forall x,y\in\partial\Omega,
\end{equation}
the Lax--Hopf function $\uo$ coincides with
\begin{equation}\label{f:LH4}
\tilde{u}(x):= \min\{\phi(y)+ \pgauge(x-y):\ y\in\partial\Omega\}\,,\qquad x\in\overline{\Omega}\,.
\end{equation}
Namely, from (\ref{f:onelip}) it is easy to check that $\tilde{u}=\phi=\uo$ on $\partial\Omega$.
Moreover, from the representation formula (\ref{f:LH2}) it is clear that
$\tilde{u} \leq \uo$.
On the other hand,
let $x\in\Omega$ and let $y\in\partial\Omega$
be such that $\tilde{u}(x) = \phi(y) + \pgauge(x-y)$.
Let $z\in\cray{y, x}\cap\partial\Omega$
be such that $\oray{z, x}\subset\Omega$.
Then, by the representation formula (\ref{f:LH2}) and by (\ref{f:onelip})
we have that
\[
\begin{split}
\tilde{u}(x) & \leq \uo(x) \leq \phi(z) + \pgauge(x-z)
\leq \phi(y) + \pgauge(z-y) + \pgauge(x-z)
\\ & = \phi(y) + \pgauge(x-y) = \tilde{u}(x),
\end{split}
\]
so that $\tilde{u}(x) = \uo(x)$.

The following example shows that if we merely require (H4),
then in general $\uo\neq \tilde{u}$.

\begin{ehse}
Given $\epsilon\in (0,\pi)$ let us define the set
\[
\Omega := \{(\rho\cos\theta, \rho\sin\theta)\in\R^2:\
1<\rho<2,\ -\pi < \theta < \pi-\epsilon\}\,,
\]
and let $\phi\colon\partial\Omega\to\R$ be defined by $\phi = \theta$.
If we choose $K = \overline{B}_1(0)$, then $\pgauge(\xi) = |\xi|$ for
every $\xi\in\R^2$, and $\len{\gamma}$ is the usual (Euclidean) length
of a curve $\gamma$.
A straightforward computation shows that
$\phi$ satisfies (H4).
On the other hand, it is also clear that $\phi$ does not satisfy
(\ref{f:onelip}).
Namely, if we consider the two boundary points
$y_1 = (-1,0)$, $y_2 = (\cos(\pi-\epsilon), \sin(\pi-\epsilon))$,
then we have
\[
\phi(y_2) - \phi(y_1) = 2\pi - \epsilon > 2 \sin(\epsilon/2) = |y_2-y_1|.
\]

In this case, the function $\tilde{u}$ defined in (\ref{f:LH4}) does not belong
to $\spaceX$, since $\tilde{u}(y_2)\leq \phi(y_1)+|y_1-y_2|<\phi(y_2)$ and hence it does not achieve the boundary value
at $y_2$.
\end{ehse}


We conclude this section with the following result
due to S. Bianchini (see \cite{Bi} and \cite{BiGl} for its generalization to the case of nonsmooth
constraint sets $K$).

\begin{teor}\label{t:exv}
There exists a weak
solution $\vf\in L^1_+(\Omega)$
to the transport equation
\begin{equation}\label{f:dive}
-\dive(\vf\, D\gauge(D\uo)) = f
\qquad\textrm{in}\ \Omega\,.
\end{equation}
\end{teor}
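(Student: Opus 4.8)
The plan is to realize $\Omega$, up to a Lebesgue-null set, as a disjoint union of \emph{transport rays} of $\uo$, and to solve (\ref{f:dive}) as a family of one-dimensional linear ODEs along these rays, glued together by a disintegration of the Lebesgue measure. First I would set $b(x):=D\gauge(D\uo(x))$, which is defined for a.e.\ $x\in\Omega$ since $\gauge$ is $C^1$ off the origin and $D\uo\neq 0$ a.e.\ (indeed $D\uo\in\partial K$ by Theorem~\ref{t:genprop}). The key observation is that $b$ is exactly the direction field of the rays: arguing as in the proof of Corollary~\ref{c:penduno}, through a.e.\ $x$ there is a maximal segment $\cray{y,z}$, with $y\in\partial\Omega$, along which $\uo$ is affine with unit $\pgauge$-slope. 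If $\xi$ denotes its unit (w.r.t.\ $\pgauge$) direction, then $\pscal{D\uo(x)}{\xi}=1=\gauge(D\uo(x))$, and since $\xi\in\partial K^0$, Theorem~\ref{t:sch}(iii) forces $\xi=D\gauge(D\uo(x))=b(x)$. Thus the rays are the segments tangent to $b$.

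Granting a measurable parametrization of the rays by an index set $Y$, each ray being a curve $t\mapsto\gamma_y(t)$, $t\in[0,\ell(y)]$, parametrized by $\pgauge$-arclength (so that $\gamma_y'=b\circ\gamma_y$, $\gamma_y(0)\in\partial\Omega$, and $\uo(\gamma_y(t))=\uo(\gamma_y(0))+t$), together with a measure $\mu$ on $Y$ and a nonnegative Jacobian density $\J(y,t)$ realizing the disintegration
\[
\int_\Omega g\, d\meas = \int_Y\!\int_0^{\ell(y)} g(\gamma_y(t))\,\J(y,t)\,dt\,d\mu(y)
\]
for every Borel $g\ge 0$, I would define $\vf$ along each ray as the solution of the linear transport ODE with vanishing terminal datum, namely $w(y,t):=\vf(\gamma_y(t))\,\J(y,t)=\int_t^{\ell(y)} f(\gamma_y(s))\,\J(y,s)\,ds$. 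Then $\vf\ge 0$ because $f\ge0$, and $\vf$ vanishes at the final point $t=\ell(y)$. Setting $C:=\sup_y\ell(y)<\infty$ (finite since $\Omega$ is bounded) and using Tonelli,
\[
\int_\Omega \vf\, d\meas = \int_Y\!\int_0^{\ell(y)}\! w(y,t)\,dt\,d\mu(y) = \int_Y\!\int_0^{\ell(y)}\! s\,f(\gamma_y(s))\,\J(y,s)\,ds\,d\mu(y) \le C\int_\Omega f\,d\meas,
\]
so that $\vf\in L^1_+(\Omega)$.

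To verify the weak formulation (\ref{f:weak}) with $u=\uo$, fix $\varphi\in C_0^\infty(\Omega)$. Since $\gamma_y'=b\circ\gamma_y$, one has $\pscal{b}{D\varphi}(\gamma_y(t))=\frac{d}{dt}\varphi(\gamma_y(t))$, whence, by the disintegration and integration by parts in $t$,
\[
\int_\Omega \vf\,\pscal{D\gauge(D\uo)}{D\varphi}\, d\meas = \int_Y\!\int_0^{\ell(y)} w(y,t)\,\frac{d}{dt}\varphi(\gamma_y(t))\,dt\,d\mu(y).
\]
The boundary terms $[\,w\,(\varphi\circ\gamma_y)\,]_0^{\ell(y)}$ vanish: at $t=\ell(y)$ because $w(y,\ell(y))=0$, and at $t=0$ because $\gamma_y(0)\in\partial\Omega$ while $\varphi$ has compact support in $\Omega$. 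Using $\partial_t w=-f(\gamma_y(t))\,\J(y,t)$ the right-hand side equals $\int_Y\!\int_0^{\ell(y)}\varphi(\gamma_y(t))\,f(\gamma_y(t))\,\J(y,t)\,dt\,d\mu(y)=\int_\Omega f\varphi\, d\meas$, as required.

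The analytic core, and the \textbf{main obstacle}, is the construction of the disintegration. One must show that the rays form a measurable partition of $\Omega$ up to a $\meas$-null set, that the ridge set of endpoints and of non-maximal rays is negligible, and, above all, that the one-dimensional measure along rays integrated against a transverse measure $\mu$ reconstructs $\meas$ with a Jacobian $\J(y,\cdot)$ that is, for $\mu$-a.e.\ $y$, absolutely continuous in $t$ and positive on $(0,\ell(y))$, so that the integration by parts above is licit. This is precisely Bianchini's structure theorem for monotone transport rays: the strict convexity of $K^0$ (Theorem~\ref{t:sch}(ii)) guarantees both the single-valuedness of the direction field $b$ and the no-crossing property of the rays that makes the decomposition well posed, while the $C^1$ regularity of $K$ yields continuity of $b$ along each ray. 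I would therefore invoke \cite[Thm.~7.1]{Bi} for this measure-theoretic decomposition and assemble $\vf$ exactly as above.
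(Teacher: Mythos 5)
Your proposal is correct and follows essentially the same route as the paper, which states this result without proof as due to Bianchini and simply cites \cite{Bi} (the construction you outline — disintegration of $\meas$ along transport rays, the linear ODE with vanishing terminal datum, and the resulting representation — is exactly what the paper records later in Theorem~\ref{f:bian} and in its introduction). Since you, like the authors, defer the genuinely hard measure-theoretic step (the ray decomposition and the Jacobian $\J$, i.e.\ the function $\alpha$ of Theorem~\ref{f:bian}) to \cite[Thm.~7.1]{Bi}, there is no substantive difference to report.
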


As a corollary of Theorems~\ref{t:genprop} and~\ref{t:exv}
we conclude that $(\uo, \vf)$ is a solution to (\ref{f:MK}).


\section{The minimum problem}
\label{s:minimum}

In this section we shall discuss the connection between the system of PDEs (\ref{f:MK})
and the
variational problem
\begin{equation}\label{f:min}
\min \left\{-\int_{\Omega}fu\, dx:\ u\in \spaceX\right\}.
\end{equation}
It is clear that, since $f$ is non-negative and $\uo$ is the maximal
element in $\spaceX$, then $\uo$ is a solution to (\ref{f:min}).
We shall show that (\ref{f:MK}) is the Euler-Lagrange necessary and sufficient condition
for the minimum problem (\ref{f:min}).
This property, which is the analogous to the duality in Mass Transport Theory,
will be useful in the proofs of the uniqueness of the
solutions of (\ref{f:MK}).

As a preliminary step, we need to enrich the class of test functions
allowed in (\ref{f:weak}).

\begin{lemma}\label{l:approx}
Let $u\in C(\overline{\Omega})\cap W^{1,\infty}(\Omega)$, $u=0$ on $\partial\Omega$.
Then there exists a sequence $(\psi_j)\subset C^{\infty}_0(\Omega)$ such that
$\psi_j\to u$ and $D\psi_j\to Du$  a.e.\ in $\Omega$, and $\|\psi_j\|_\infty\leq \|u\|_\infty$,
$\|D\psi_j\|_\infty \leq 3 \|Du\|_\infty$ for every $j\in\N$.
\end{lemma}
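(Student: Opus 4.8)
The plan is to produce the $\psi_j$ in two stages: first replace $u$ by a Lipschitz function that already vanishes in a neighbourhood of $\partial\Omega$, and then mollify. The delicate point is that $\Omega$ is merely bounded, connected and open, so I cannot rely on any control of $u$ in terms of the Euclidean distance to $\partial\Omega$ (for a bad domain the Euclidean Lipschitz constant of $u$ may exceed $\|Du\|_\infty$). The truncation must therefore be performed intrinsically, at the level of the values of $u$ rather than of the geometry of $\Omega$.

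\emph{First stage.} For $\delta>0$ set $T_\delta(s):=\mathrm{sgn}(s)\,(|s|-\delta)^+$ and $u_\delta:=T_\delta\circ u$. Since $T_\delta$ is $1$-Lipschitz with $T_\delta(0)=0$ and $|T_\delta(s)-s|\le\delta$, the function $u_\delta$ is Lipschitz, $|u_\delta|\le|u|$, and $\|u_\delta-u\|_\infty\le\delta$; by the chain rule $Du_\delta=\mathbf{1}_{\{|u|>\delta\}}\,Du$ a.e., so $\|u_\delta\|_\infty\le\|u\|_\infty$ and $\|Du_\delta\|_\infty\le\|Du\|_\infty$. The key gain is that $u_\delta$ is supported in $\{|u|\ge\delta\}$, which is a compact subset of $\Omega$ because $u\in C(\overline\Omega)$ vanishes on $\partial\Omega$; thus $u_\delta$ is compactly supported in $\Omega$ irrespective of the regularity of the boundary. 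Letting $\delta\to0$ one checks that $u_\delta\to u$ uniformly, while $Du_\delta\to Du$ a.e.: on $\{u\ne0\}$ one has $Du_\delta=Du$ as soon as $\delta<|u|$, and on $\{u=0\}$ both gradients vanish a.e., using the standard fact that $Du=0$ a.e.\ on the level set $\{u=0\}$.

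\emph{Second stage.} Each $u_\delta$, extended by $0$ to $\R^n$, is globally Lipschitz (it vanishes in a full neighbourhood of $\partial\Omega$ and outside $\Omega$), so for $\varepsilon$ smaller than the distance of $\operatorname{supp}u_\delta$ from $\partial\Omega$ the mollification $u_\delta*\eta_\varepsilon$ lies in $C^\infty_0(\Omega)$, with $\|u_\delta*\eta_\varepsilon\|_\infty\le\|u_\delta\|_\infty$ and $\|D(u_\delta*\eta_\varepsilon)\|_\infty=\|(Du_\delta)*\eta_\varepsilon\|_\infty\le\|Du_\delta\|_\infty$. As $\varepsilon\to0$ one has $u_\delta*\eta_\varepsilon\to u_\delta$ uniformly and $D(u_\delta*\eta_\varepsilon)\to Du_\delta$ in $L^1(\Omega)$.

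\emph{Conclusion by diagonalisation.} I choose $\delta_j\downarrow0$, and then $\varepsilon_j$ so small that $\psi_j:=u_{\delta_j}*\eta_{\varepsilon_j}\in C^\infty_0(\Omega)$ satisfies $\|\psi_j-u_{\delta_j}\|_\infty\le 1/j$ and $\|D\psi_j-Du_{\delta_j}\|_{L^1(\Omega)}\le 1/j$. Then $\psi_j\to u$ uniformly and $D\psi_j\to Du$ in $L^1(\Omega)$, while the sup-norm bounds chain together to give $\|\psi_j\|_\infty\le\|u\|_\infty$ and $\|D\psi_j\|_\infty\le\|Du\|_\infty$ — in fact sharper than the stated factor $3$. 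The only convergence that is not automatic is the pointwise one for the gradients: $L^1$ convergence forces a.e.\ convergence only along a subsequence, so I extract one and relabel. I expect this reconciliation — matching the uniform convergence of the first stage with the $L^1$ (hence a.e.-along-a-subsequence) convergence of the mollified gradients — to be the only genuine technical step; the truncation $T_\delta\circ u$, which sidesteps every regularity requirement on $\partial\Omega$, is the device that renders the rest routine.
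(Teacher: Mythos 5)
Your proof is correct and follows essentially the same route as the paper: truncate $u$ near its zero level set to obtain a compactly supported Lipschitz function, then mollify and diagonalise (extracting a subsequence for a.e.\ convergence of the gradients). The only difference is that the paper truncates with a $C^1$ function $G(ju)/j$ satisfying $|G'|\le 3$ (whence the factor $3$ in the statement), whereas your piecewise-linear truncation $\mathrm{sgn}(s)\,(|s|-\delta)^+$ is $1$-Lipschitz and yields the sharper bound $\|D\psi_j\|_\infty\le\|Du\|_\infty$.
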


\begin{proof}
We use a standard truncation argument.
Let $G\in C^1(\R)$ such that $0\leq G(t)\leq |t|$ for every $t\in\R$, $G(t) = 0$ for $|t|\leq 1$,
$G(t) = |t|$ for $|t|\geq 2$, $|G'(t)|\leq 3$ for every $t\in\R$.

Let us define $u_j := G(ju) / j$,
so that $u_j \to u$ pointwise in $\overline{\Omega}$.
Moreover, $Du_j = G'(ju) Du$, hence $Du_j\to Du$  a.e.\ in $\Omega$ and
$|Du_j| \leq 3 |Du|$.

Let $\varphi_{\epsilon}$ be the standard family of mollifiers in $\R^n$.
Since $u_j$ has compact support in $\Omega$,
we can choose a sequence $\varepsilon_j \searrow 0$,
such that the sequence $\psi_j := \varphi_{\epsilon_j} \ast u_j$
has the required properties.
\end{proof}

%

\begin{cor}\label{c:density}
$C^{\infty}_0(\Omega)$ is dense in the space
$X := \{u-w:\ u,w\in \spaceX\}$
with respect to the weak${}^*$ topology of $W^{1,\infty}(\Omega)$.
As a consequence,
if $(u,v)\in \spaceX\times L^1_+(\Omega)$ satisfies (\ref{f:weak}),
then
\begin{equation}\label{f:weakB}
\int_{\Omega} v \pscal{D\gauge(Du)}{Dw-Du} = \int_{\Omega} f(w-u)
\qquad \forall w\in \spaceX.
\end{equation}
\end{cor}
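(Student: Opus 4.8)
The plan is to prove the density statement first and then deduce \eqref{f:weakB} by a straightforward approximation argument. For the density, observe that any element of $X$ has the form $u-w$ with $u,w\in\spaceX$; since both $u$ and $w$ equal $\phi$ on $\partial\Omega$, the difference $g:=u-w$ satisfies $g\in C(\overline\Omega)\cap W^{1,\infty}(\Omega)$ with $g=0$ on $\partial\Omega$. Thus $g$ is exactly a function of the type treated in Lemma~\ref{l:approx}. Applying that lemma, I would obtain a sequence $(\psi_j)\subset C^\infty_0(\Omega)$ with $\psi_j\to g$ and $D\psi_j\to Dg$ a.e.\ in $\Omega$, together with the uniform bounds $\|\psi_j\|_\infty\le\|g\|_\infty$ and $\|D\psi_j\|_\infty\le 3\|Dg\|_\infty$. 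These are precisely the ingredients needed for weak${}^*$ convergence in $W^{1,\infty}(\Omega)$: the uniform $W^{1,\infty}$-bound guarantees that $(\psi_j)$ is bounded in $W^{1,\infty}(\Omega)=(W^{1,1}(\Omega))^*$, and the a.e.\ convergence (combined with dominated convergence against any $L^1$ test function) identifies the weak${}^*$ limit as $g$. Hence $\psi_j\overset{*}{\rightharpoonup}g$ in $W^{1,\infty}(\Omega)$, which proves that $C^\infty_0(\Omega)$ is weak${}^*$-dense in $X$.

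For the consequence, I would fix $w\in\spaceX$ and a solution $(u,v)$ satisfying \eqref{f:weak}, and set $g:=w-u\in X$. Choosing the sequence $(\psi_j)\subset C^\infty_0(\Omega)$ approximating $g$ as above, I would write \eqref{f:weak} with test function $\varphi=\psi_j$, obtaining
\[
\int_\Omega v\,\pscal{D\gauge(Du)}{D\psi_j} = \int_\Omega f\psi_j
\qquad\text{for every }j.
\]
The goal is then to pass to the limit in both integrals. On the right-hand side, $|\psi_j|\le\|g\|_\infty$ and $f\in L^1(\Omega)$, so dominated convergence gives $\int_\Omega f\psi_j\to\int_\Omega f g=\int_\Omega f(w-u)$. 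On the left-hand side, the integrand is $v\,\pscal{D\gauge(Du)}{D\psi_j}$; since $D\psi_j\to Dg$ a.e., the inner product converges pointwise a.e.\ to $\pscal{D\gauge(Du)}{Dg}$, and it is dominated by
\[
|v|\,|D\gauge(Du)|\,|D\psi_j|\le 3\,\|Dg\|_\infty\,|v|\,\sup_{\xi\neq 0}|D\gauge(\xi)|,
\]
which is an $L^1$ majorant because $D\gauge$ is bounded on $\R^n\setminus\{0\}$ (being positively $0$-homogeneous with values in the compact set $\partial K^0$ by Theorem~\ref{t:sch}(iii)) and $v\in L^1(\Omega)$. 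Dominated convergence then yields $\int_\Omega v\,\pscal{D\gauge(Du)}{D\psi_j}\to\int_\Omega v\,\pscal{D\gauge(Du)}{Dg}$, and combining the two limits produces exactly \eqref{f:weakB}.

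The only delicate point is the domination on the left-hand side, and it is mild: one must note that $D\gauge(Du)$ is essentially bounded independently of the point, which follows from Theorem~\ref{t:sch}(iii) since $D\gauge(\xi)\in\partial K^0$ and $K^0$ is compact. With that uniform bound in hand, the factor $|D\psi_j|\le 3\|Dg\|_\infty$ supplied by Lemma~\ref{l:approx} completes the $L^1$-dominating function, and both passages to the limit are routine applications of the dominated convergence theorem. I do not expect any genuine obstacle here; the essential content of the corollary is the enlargement of the class of admissible test functions from $C^\infty_0(\Omega)$ to the full affine family $\spaceX$, and Lemma~\ref{l:approx} has already done the substantive work.
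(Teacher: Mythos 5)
Your proof is correct and follows exactly the route the paper intends: the corollary is stated without proof as an immediate consequence of Lemma~\ref{l:approx}, and your argument (apply the lemma to $g=u-w$, which vanishes on $\partial\Omega$, then pass to the limit in \eqref{f:weak} by dominated convergence using the uniform bound $D\gauge(Du)\in\partial K^0$) is precisely the intended one. The only blemish is the parenthetical identification $W^{1,\infty}(\Omega)=(W^{1,1}(\Omega))^*$, which is not correct as stated; but since the weak${}^*$ topology here just means weak${}^*$ convergence of $\psi_j$ and $D\psi_j$ in $L^\infty$, your actual argument (uniform bounds plus a.e.\ convergence tested against $L^1$ functions) establishes exactly what is needed.
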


We are now in a position to prove that the Monge-Kantorovich system (\ref{f:MK})
is the Euler-Lagrange condition for the minimum problem (\ref{f:min}).

\begin{teor}\label{t:equiv}
The minimum problem (\ref{f:min}) and the system of PDEs (\ref{f:MK})
are equivalent in the following sense.
\begin{itemize}
\item[(i)]
$u\in\spaceX$ is a solution to (\ref{f:min}) if and only if
there exists $v \in L^1_+(\Omega)$
such that
$(u,v)$ is a solution to~(\ref{f:MK}).
\item[(ii)]
Let $u\in\spaceX$ be a solution to~(\ref{f:min}).
Then $(u,v)$ is a solution to~(\ref{f:MK})
if and only if $(\uo, v)$ is a solution to~(\ref{f:MK}).
\end{itemize}
\end{teor}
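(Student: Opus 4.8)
The plan is to reduce all three implications to one pointwise computation combined with the strict convexity of $K^0$. The elementary inequality I would use throughout is the following: for any $u, w \in \spaceX$, since $Dw \in K$ and, by Theorem~\ref{t:sch}(iii), $D\gauge(Du) \in \partial K^0 \subset K^0$, the definition of the polar set gives $\pscal{D\gauge(Du)}{Dw} \le 1$, while $\pscal{D\gauge(Du)}{Du} = \gauge(Du)$; hence
\[
\pscal{D\gauge(Du)}{Dw - Du} \le 1 - \gauge(Du) \qquad \text{a.e.\ in } \Omega.
\]
In particular, whenever the pointwise condition $(1-\gauge(Du))v = 0$ holds and $v \ge 0$, the product $v\,\pscal{D\gauge(Du)}{Dw-Du}$ is $\le 0$ a.e. The only other ingredient is that $\uo$ is itself a minimizer of (\ref{f:min}) (being the maximal element of $\spaceX$, with $f \ge 0$), so that any solution $u$ of (\ref{f:min}) satisfies $\int_\Omega f(\uo - u) = 0$, since both $u$ and $\uo$ realize the minimum.

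For the implication ``$\Leftarrow$'' of (i), I would take a solution $(u,v)$ of (\ref{f:MK}) and insert an arbitrary $w \in \spaceX$ into the enriched weak formulation (\ref{f:weakB}) of Corollary~\ref{c:density}. Its right-hand side is $\int_\Omega f(w-u)$, while the left-hand side is $\int_\Omega v\,\pscal{D\gauge(Du)}{Dw-Du} \le 0$ by the inequality above. Thus $-\int_\Omega f u \le -\int_\Omega f w$ for every $w \in \spaceX$, that is, $u$ solves (\ref{f:min}).

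The heart of the matter is (ii), and both directions follow the same template. Assume $u$ solves (\ref{f:min}), and suppose first that $(u,v)$ solves (\ref{f:MK}). Testing (\ref{f:weakB}) with $w = \uo$ gives $\int_\Omega v\,\pscal{D\gauge(Du)}{D\uo - Du} = \int_\Omega f(\uo - u) = 0$; as the integrand is $\le 0$ a.e.\ and integrates to zero, it vanishes a.e. On $\{v > 0\}$ we have $\gauge(Du) = 1$, so this forces $\pscal{D\gauge(Du)}{D\uo} = \gauge(Du) = 1 = \gauge(D\uo)$ (recall $D\uo \in \partial K$ a.e.\ by Theorem~\ref{t:genprop}). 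The key step is then that $D\gauge(Du) \in K^0$ realizes the maximum defining $\gauge(D\uo)$, so the uniqueness clause of Theorem~\ref{t:sch}(iii) — i.e.\ the strict convexity of $K^0$ — yields $D\gauge(Du) = D\gauge(D\uo)$ a.e.\ on $\{v>0\}$. Consequently $v\,D\gauge(Du) = v\,D\gauge(D\uo)$ a.e., the two divergence equations coincide, and $(\uo, v)$ solves (\ref{f:MK}), its pointwise condition being automatic since $\gauge(D\uo) = 1$ a.e. The reverse direction is symmetric: testing (\ref{f:weakB}) for the solution $(\uo, v)$ with $w = u$ gives $\pscal{Du}{D\gauge(D\uo)} = 1$ on $\{v>0\}$, whence $\gauge(Du) \ge 1$; combined with $Du \in K$ this gives $\gauge(Du) = 1$ there, so $(1-\gauge(Du))v = 0$, and Theorem~\ref{t:sch}(iii) again identifies $D\gauge(Du) = D\gauge(D\uo)$ on $\{v>0\}$, so that $(u,v)$ solves (\ref{f:MK}).

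Finally, for ``$\Rightarrow$'' of (i) I would take $v = \vf$: by Theorem~\ref{t:exv} (and the remark following it) the pair $(\uo, \vf)$ solves (\ref{f:MK}), and since $u$ solves (\ref{f:min}), the reverse direction of (ii) just proved upgrades this to a solution $(u, \vf)$. I expect the only delicate point to be the strict-convexity step pinning down $D\gauge(Du) = D\gauge(D\uo)$ on $\{v > 0\}$; this is precisely where hypothesis (\ref{f:hypk}) enters, through the uniqueness of the maximizer in Theorem~\ref{t:sch}(iii). Everything else is bookkeeping with (\ref{f:weakB}) and the sign of the integrand.
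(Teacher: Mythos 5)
Your proposal is correct and follows essentially the same route as the paper: the ``only if'' part of (i) is the paper's subgradient argument for $I_K$ written out as the pointwise inequality $v\,\pscal{D\gauge(Du)}{Dw-Du}\le v(1-\gauge(Du))=0$, and both directions of (ii) (and hence the ``if'' part of (i), via $v=\vf$ and Theorem~\ref{t:exv}) are obtained exactly as in the paper by testing (\ref{f:weakB}) with $u-\uo$, using $\int_\Omega f(\uo-u)=0$ and the uniqueness clause of Theorem~\ref{t:sch}(iii) to conclude $D\gauge(Du)=D\gauge(D\uo)$ on $\{v>0\}$. The only differences are organizational (you derive (i)$\Rightarrow$ from (ii) rather than directly), not mathematical.
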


\begin{proof}
Let us denote by $I_{K}$ the indicator function of the set $K$, that is
\[
I_K(x) :=
\begin{cases}
0 & \textrm{if} \ x\in K\,, \\
+\infty & \textrm{if} \ x\in \R^n \setminus K\,,
\end{cases}
\]
so that
\[
F(u):=\int_{\Omega}[I_K(Du)-fu]\, dx = -\int_{\Omega}fu\, dx,
\qquad \forall u\in \spaceX\,.
\]
Since $\gauge$ is differentiable in $\R^n \setminus \{0\}$,
the subgradient of the indicator function
can be explicitly computed, obtaining
\begin{equation}\label{subindk}
\partial I_{K}(\xi)=
\begin{cases}
\{\alpha D\gauge(\xi)\colon \alpha \geq 0\} & \text{if}\ \xi\in \partial K, \\
\emptyset & \text{if}\ \xi\not\in K, \\
\{0\} & \text{if}\ \xi\in \inte K
\end{cases}
\end{equation}
(see e.g.\ \cite[Sect.~23]{Rock}).

Hence, if $(u,v)$ is a solution to (\ref{f:MK}),
then $v(x) D\gauge(Du(x)) \in \partial I_{K}(Du(x))$ for a.e.\ $x\in\Omega$, so that,
for every $w\in \spaceX$
\begin{equation}\label{f:minprf}
F(w) - F(u)\geq
\int_\Omega v \pscal{D\gauge(Du)}{Dw-Du}\, dx  -
\int_\Omega f\,(w-u)\, dx =0\,,
\end{equation}
where the last equality follows from
Corollary~\ref{c:density}.
This proves that $u$ is a solution to (\ref{f:min}).

Assume now that $u\in\spaceX$ is a minimizer for $F$, so that $f(u-\uo)=0$
a.e.\ in $\Omega$, due to the maximality of $\uo$ in $\spaceX$ and the fact that $f\geq 0$.
By Corollary~\ref{c:density}
we can choose $u-\uo$ as test function in the transport equation (\ref{f:dive})
solved by $\uo$, getting
\[
0=\int_\Omega \vf \pscal{D\gauge(D\uo)}{Du-D\uo}\, dx =
-\int_\Omega \vf (1-\pscal{D\gauge(D\uo)}{Du})\, dx\,.
\]
On the other hand, by Theorem \ref{t:sch}(ii)
and the fact that $Du\in K$ a.e.\ in $\Omega$, we have
\[
\begin{split}
1-\pscal{D\gauge(D\uo(x))}{Du(x)}& \geq 0,\ \text{a.e.}\ x\in \Omega\,, \\
1-\pscal{D\gauge(D\uo(x))}{Du(x)}& = 0 \
\Longleftrightarrow\ D\gauge(D\uo(x))=D\gauge(Du(x)),
\end{split}
\]
so that
$\vf D\gauge(D\uo)=\vf D\gauge(Du)$
and $\vf (1-\gauge(Du))=0$ a.e.\ in $\Omega$,
that is $(u, \vf)$ is a solution of (\ref{f:MK}).
This concludes the proof of (i).

Let us prove (ii).
The previous computation shows that if $(\uo,v)$ is a solution of (\ref{f:MK}), and $u\in\spaceX$ is a solution of the minimum problem (\ref{f:min}),
then also $(u,v)$ is a solution of (\ref{f:MK}).
Finally, let $(u,v)$ be a solution to (\ref{f:MK}).
Upon observing that $v\gauge(Du)=v$ a.e.\ in $\Omega$,
and choosing $u-\uo$
as test function in the weak formulation of the transport equation
\[
-\dive(v\, D\gauge(Du)) = f,
\qquad\textrm{in}\ \Omega
\]
we conclude that the opposite implication holds.
\end{proof}

\section{Uniqueness of the $v$--component}
\label{s:unique}

Thanks to Theorem~\ref{t:equiv}(ii), the uniqueness of the $v$ component
of the solution of the system of PDEs (\ref{f:MK}) will follow from the fact that the
function $v_f$ introduced in Theorem \ref{t:exv} is the unique weak solution
in $L^1_+(\Omega)$
of the equation
\begin{equation}\label{f:divei}
-\dive(v\, D\gauge(D\uo))=f\,.
\end{equation}

We shall use an explicit representation of $v_f$ proved in \cite{Bi}
and, in order to explain it,
we need to introduce some additional tools, related to the directions where the function $\uo$
has the maximal slope.


Recalling the representation formula (\ref{f:LH2}) for $\uo$,
for every $x\in\overline{\Omega}$ let us define the projections of $x$ by
\[
\Pi(x):=\{y\in V_x\colon\ \uo(x)=\phi(y)+\pgauge(x-y)\}
\]
and, for every $x\in\Omega$, let $\Delta(x)$ be the set of directions through $x$
\[
\Delta(x):=\left\{\frac{x-y}{\pgauge(x-y)}:\ y\in\Pi(x)\right\}\,, \qquad x\in\Omega\,.
\]
Let $D\subset\Omega$ be the set of points with multiple projections, that is
\begin{equation}\label{f:D}
D:=\{x\in \Omega:\ \Delta(x)\ \text{is not a singleton}\},
\end{equation}
and for every $x\in \Omega\setminus D$,
let $p(x)$ and $d(x)$ denote
the unique elements in $\Pi(x)$ and $\Delta(x)$ respectively.
Finally,
let $\tau(x)$ be defined by
\begin{equation}\label{f:tau}
\tau(x):=
\begin{cases}
\sup\{t\geq 0;\ \uo(x+sd(x)))=\uo(x)+s\ \forall s\in [0,t]\}, &
x\in \Omega\setminus D\,, \\
0 & x\in D\,,
\end{cases}
\end{equation}
and let $J$ be the set
\begin{equation}\label{f:qj}
J:=\bigcup_{x\in\Omega}q(x),\quad q(x):=x+\tau(x)d(x),
\end{equation}
where we understand that $q(x)=x$ if $x\in D$.
\begin{dhef}
We shall call \textsl{transport ray} through $x\in\Omega$
any segment $\cray{p,q(x)}$, $p\in\proj(x)$.
\end{dhef}
It is clear from the definition that, if $x\in \Omega\setminus D$, then
there is a unique transport ray $\cray{p(x),q(x)}$ through $x$.
On the other hand, if $x\in D$, any segment $\cray{p,x}$, with
$p\in\proj(x)$, is a transport ray through $x$.

The transport rays correspond to the segments where $\uo$ grows
linearly with maximal slope, as stated in the following lemma.
Hence in the sandpiles model, they correspond to the
directions where the matter is allowed to run down, that is where the $v$--component
could be nonzero.

\begin{lemma}\label{l:prunica}
For every $x\in\Omega$ and $y\in\proj(x)$ we have
\begin{equation}\label{f:ulin}
\uo(z) = \phi(y) + \pgauge(z-y)
\qquad \forall z\in\cray{y,x}\,.
\end{equation}
Moreover, if $x\in\Omega\setminus D$, then
\begin{equation}\label{f:ulin2}
\uo(z) = \phi(p(x)) + \pgauge(z-p(x))
\qquad \forall z\in\cray{p(x),q(x)}\,.
\end{equation}
\end{lemma}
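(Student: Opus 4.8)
My plan is to prove both identities by squeezing $\uo$ between the upper bound furnished by the representation formula (\ref{f:LH2}) and the lower bound furnished by the $1$-Lipschitz property of $\uo$ with respect to $\pgauge$ (Lemma~\ref{l:diseqrho}), exploiting the positive homogeneity of $\pgauge$ to split distances additively along a segment. The additive splitting is what makes the two one-sided estimates collapse into an equality.

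For the first identity, I would fix $x\in\Omega$, $y\in\proj(x)$, and an intermediate point $z\in\oray{y,x}$, writing $z=y+t(x-y)$ with $t\in(0,1)$. Positive homogeneity gives $\pgauge(z-y)=t\,\pgauge(x-y)$ and $\pgauge(x-z)=(1-t)\,\pgauge(x-y)$, hence $\pgauge(z-y)+\pgauge(x-z)=\pgauge(x-y)$. Since $\oray{y,z}\subset\oray{y,x}\subset\Omega$, the point $y$ is visible for $z$, so (\ref{f:LH2}) yields $\uo(z)\le\phi(y)+\pgauge(z-y)$; and since $\cray{z,x}\subset\Omega$, Lemma~\ref{l:diseqrho} yields $\uo(x)-\uo(z)\le\pgauge(x-z)$. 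Chaining these with $y\in\proj(x)$ gives
\[
\uo(x)\le\uo(z)+\pgauge(x-z)\le\phi(y)+\pgauge(z-y)+\pgauge(x-z)=\phi(y)+\pgauge(x-y)=\uo(x),
\]
forcing equality throughout, whence $\uo(z)=\phi(y)+\pgauge(z-y)$. The endpoints $z=x$ and $z=y$ follow from the definition of $\proj(x)$ and from $\uo=\phi$ on $\partial\Omega$.

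For the second identity, assume $x\in\Omega\setminus D$, set $r:=\pgauge(x-p(x))>0$, and recall $d(x)=(x-p(x))/r$, so that $\pgauge(d(x))=1$ and $x=p(x)+r\,d(x)$. Then $p(x)$, $x$, $q(x)$ are collinear, and $\cray{p(x),q(x)}=\{p(x)+t\,d(x):\ t\in[0,r+\tau(x)]\}$ is split by $x$ into $\cray{p(x),x}$ and $\cray{x,q(x)}$. On $\cray{p(x),x}$ the claim is exactly the first identity with $y=p(x)$. On $\cray{x,q(x)}$ I would write $z=x+s\,d(x)$ with $s\in[0,\tau(x)]$: the definition (\ref{f:tau}) of $\tau$ gives $\uo(z)=\uo(x)+s$, while $z-p(x)=(r+s)\,d(x)$ gives $\pgauge(z-p(x))=r+s$; combining with $\uo(x)=\phi(p(x))+r$ from the first identity yields $\uo(z)=\phi(p(x))+\pgauge(z-p(x))$.

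I expect no serious obstacle here: the only points requiring care are the bookkeeping verifications that $y$ is visible for the intermediate point $z$ (so that (\ref{f:LH2}) may be invoked) and that $\cray{p(x),q(x)}$ is a genuine straight segment through $x$ parametrized by $\pgauge$-arclength along $d(x)$. All the analytic content reduces to the additivity $\pgauge(z-y)+\pgauge(x-z)=\pgauge(x-y)$ valid on a segment, together with the fact, built into the definition of $\tau$, that $\uo$ grows with slope exactly one along the direction $d(x)$.
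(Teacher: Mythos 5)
Your proof is correct and follows essentially the same route as the paper: the paper derives (\ref{f:ulin}) from the squeeze identity (\ref{f:linpsi}) established in the proof of Corollary~\ref{c:penduno} (two one-sided $\pgauge$-Lipschitz estimates collapsing to an equality along the segment), and obtains (\ref{f:ulin2}) by splitting $\cray{p(x),q(x)}$ at $x$ and invoking the definition of $q(x)$ on $\cray{x,q(x)}$, exactly as you do. Your only cosmetic deviation is using the representation formula (\ref{f:LH2}) for the upper bound at $z$ instead of the Lipschitz inequality $\uo(z)-\uo(y)\leq\pgauge(z-y)$, which is an equivalent step.
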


\begin{proof}
The identity (\ref{f:ulin}) follows from (\ref{f:linpsi}), while
(\ref{f:ulin2}) for $x\in\Omega\setminus D$ follows from
(\ref{f:ulin}) when $z\in \cray{p(x),x}$, and from the very
definition of $q(x)$ when $z\in\cray{x, q(x)}$.
\end{proof}

%

\begin{cor}\label{l:project}
Given $x\in \Omega\setminus D$, let $\cray{p(x),q(x)}$ be the transport ray through $x$.
Then $\proj(z)=\{p(x)\}$ for every
$z \in \oray{p(x),q(x)}$.
\end{cor}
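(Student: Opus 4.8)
The plan is to show that every interior point $z$ of the transport ray through $x\in\Omega\setminus D$ inherits the uniqueness of projections from $x$. First I would fix $z\in\oray{p(x),q(x)}$ and prove that $p(x)\in\proj(z)$, i.e.\ that $p(x)$ is a visible point for $z$ realizing the minimum in the representation formula~(\ref{f:LH2}). Visibility is clear: since $\oray{p(x),x}\subset\Omega$ and $z$ lies on the open segment $\oray{p(x),q(x)}$, the segment $\oray{p(x),z}$ is contained in $\Omega$, so $p(x)\in V_z$. That $p(x)$ realizes the minimum is exactly the content of Lemma~\ref{l:prunica}: identity~(\ref{f:ulin2}) gives $\uo(z)=\phi(p(x))+\pgauge(z-p(x))$ for all $z\in\cray{p(x),q(x)}$, and since the right-hand side is an upper bound for $\uo(z)$ over all $y\in V_z$ by~(\ref{f:LH2}), equality forces $p(x)\in\proj(z)$.

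The heart of the argument is to rule out any \emph{second} projection point for $z$. Suppose $y'\in\proj(z)$ with $y'\neq p(x)$, so that $\uo(z)=\phi(y')+\pgauge(z-y')$. The idea is to transport this optimality back to $x$ and derive a contradiction with $x\in\Omega\setminus D$. Since $x$ lies on the segment $\cray{z,q(x)}$ and $\uo$ grows with maximal slope along the ray, one has $\uo(x)=\uo(z)+\pgauge(x-z)=\phi(y')+\pgauge(z-y')+\pgauge(x-z)$. By the triangle inequality in Theorem~\ref{t:sch}(ii), $\pgauge(z-y')+\pgauge(x-z)\geq\pgauge(x-y')$, while the representation~(\ref{f:LH2}) gives $\uo(x)\leq\phi(y')+\pgauge(x-y')$ (after checking $y'\in V_x$). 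Combining these yields $\pgauge(z-y')+\pgauge(x-z)=\pgauge(x-y')$, the equality case in the subadditivity of $\pgauge$.

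By Theorem~\ref{t:sch}(ii), this equality forces the vectors $x-z$ and $z-y'$ to be positively proportional, i.e.\ the three points $y'$, $z$, $x$ are collinear with $z$ between $y'$ and $x$. Thus $y'$ lies on the line through $x$ in direction $d(x)$, and the optimality $\uo(x)=\phi(y')+\pgauge(x-y')$ shows that $y'\in\proj(x)$ gives rise to a direction in $\Delta(x)$; but $y'\neq p(x)$ would produce a projection direction distinct from $d(x)$, contradicting that $x\in\Omega\setminus D$ (where $\Delta(x)$ is a singleton). This forces $y'=p(x)$, so $\proj(z)=\{p(x)\}$.

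The main obstacle I anticipate is the collinearity extraction: one must verify carefully that $y'\in V_x$ (so that it is a legitimate competitor in~(\ref{f:LH2}) for $x$) before invoking the equality case, and then correctly translate the positive-proportionality of $x-z$ and $z-y'$ into the statement that $y'$ determines the \emph{same} projection direction $d(x)$ at $x$. A minor subtlety is ensuring $z\neq p(x)$ and $z\neq q(x)$ so that both $\pgauge(z-y')$ and $\pgauge(x-z)$ are strictly positive, which is guaranteed by taking $z$ in the \emph{open} segment. Everything else reduces to the representation formula and the strict-convexity consequences already packaged in Theorem~\ref{t:sch}(ii) and Lemma~\ref{l:prunica}.
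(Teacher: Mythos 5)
Your first step, showing $p(x)\in\proj(z)$, is fine and agrees with the paper. The exclusion of a second projection, however, has three genuine problems. First, your argument presupposes $x\in\cray{z,q(x)}$, i.e.\ that $z$ lies between $p(x)$ and $x$; for $z\in\oray{x,q(x)}$ the identity $\uo(x)=\uo(z)+\pgauge(x-z)$ is false (there one has $\uo(z)=\uo(x)+\pgauge(z-x)$), and the whole ``transport back to $x$'' scheme collapses. That missing half is the substantive content of the corollary: propagating uniqueness of the projection \emph{forward} along the ray past $x$. Second, the inequality $\uo(x)\leq\phi(y')+\pgauge(x-y')$ requires $y'\in V_x$, i.e.\ $\oray{y',x}\subset\Omega$; you flag this yourself but do not supply it, and it is genuinely unavailable a priori in a nonconvex $\Omega$ (only $\oray{y',z}\subset\Omega$ is known). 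Third, even granting collinearity of $y'$, $z$, $x$ with $z$ in between, your final contradiction is wrong: such a $y'$ lies on the \emph{same} ray issuing from $x$ as $p(x)$ does, hence produces the \emph{same} direction $d(x)$, so $\Delta(x)$ remains a singleton and $x\in\Omega\setminus D$ is not contradicted. The correct way to conclude from collinearity is that two distinct visible boundary points cannot sit on one open ray from $x$: the nearer one would lie in the open visibility segment of the farther one, hence in $\Omega$, contradicting that it belongs to $\partial\Omega$.

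The paper's proof repairs all three difficulties at once by transporting to the far endpoint $q(x)$ instead of to $x$: if $y\in\proj(z)$ with $y\neq p(x)$, the broken curve $\cray{y,z}\cup\cray{z,q(x)}$ is an admissible curve in $\overline{\Omega}$, with interior contained in $\Omega$, whose length equals $\uo(q(x))-\phi(y)$; it is therefore a geodesic, Lemma~\ref{l:geoint} forces it to be a segment, and a short case analysis excludes alignment of $y$, $z$, $q(x)$. Working with the curve-based formula (\ref{f:LH}) rather than the visibility formula (\ref{f:LH2}) is precisely what removes the $y'\in V_x$ obstruction, and transporting forward to $q(x)$ handles both positions of $z$ relative to $x$ simultaneously. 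You would need to rebuild your argument along these lines.
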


\begin{proof}
{}From (\ref{f:ulin2}) it is clear that
$p(x)\in\proj(z)$ for every $z\in\oray{p(x),q(x)}$.
Assume by contradiction that there exists $z \in \oray{p(x),q(x)}$
such that $\proj(z)$ contains a point $y \neq p(x)$.

We claim that the three points $y$, $z$ and $q(x)$ cannot be aligned.
Namely, we cannot have $z\in \oray{y, q(x)}$, for otherwise we would have $y=p(x)$.
On the other hand, we cannot have $q(x)\in \cray{y,z}$: indeed, in this case, by (\ref{f:ulin2})
we would have
$\uo(q(x)) = \phi(y) + \pgauge(q(x)-y)$, while, by (\ref{f:ulin}), we know that
$\uo(q(x)) = \uo(z)+\pgauge(q(x)-z)$.
Putting together these information, we get
\[
\begin{aligned}
\uo(z) & = \phi(y)+\pgauge(z-y)=\phi(y)-\pgauge(z-q(x))+\pgauge(q(x)-y) \\
& = \uo(z)+\pgauge(q(x)-z)+\pgauge(z-q(x))> \uo(z),
\end{aligned}
\]
a contradiction.

Since the curve
$\cray{y,z}\cup\cray{z,q(x)}$, joining $y$ to $q(x)$ and lying in $\Omega$
except for its endpoints, is not a segment,
by Lemma~\ref{l:geoint} it cannot be a geodesic,
so that
\[
\uo(q(x))< \phi(y)+\pgauge(z-y) + \pgauge(q(x)-z).
\]
Taking into account (\ref{f:ulin2}) we get
\[
\uo(z) = \uo(q(x)) - \pgauge(q(x)-z)
< \phi(y) + \pgauge(z-y),
\]
a contradiction with $y\in\proj(z)$.
\end{proof}

At this stage we are dealing with
three meaningful sets of  singular points associated to $\uo$:
\begin{itemize}
\item[$\Sigma$,] the sets of points in $\Omega$ where $\uo$ is not differentiable;
\item[$D$,] the sets of points in $\Omega$ with multiple projections;
\item[$J$,] the set of endpoints in $\Omega$ of the transport rays.
\end{itemize}

The set $J$ plays a central role in optimal transport theory.
As a matter of fact,
the disintegration of the Lebesgue measure in the reference set $\Omega$
along transport rays,
can be achieved only if $J$ has Lebesgue measure zero,
and this is one of the main tools of the theory.

If $\Omega$ and $K$ are sufficiently smooth (e.g.\ both of class $C^2$, and $K$ with strictly positive curvatures),
one can prove that $D$ coincides with $\Sigma$ (see, e.g. \cite{CaSi,CMf}),
so that, by Rademacher's Theorem,
$D$ has zero Lebesgue measure.
Moreover, using the  methods of
Differential Geometry, it is possible to prove that $J=\overline{\Sigma}$ and to characterize
$J\setminus \Sigma$ in terms of optimal focal points. As a consequence one can prove that
$J$ has zero Lebesgue measure and give detailed rectifiability results (see, e.g., \cite{CMf,EH}).

If $\Omega$ and $K$ are not regular, the set $\overline{\Sigma}$ may have positive Lebesgue measure
(see \cite{MM} for an example where $\Omega\in C^{1,\alpha}$, $\alpha<1$,
and $K$ is the unit ball),
and $J$ can be a proper subset of $\overline{\Sigma}$.

We recall here some known properties of the singular sets,
adding some remarks in order
to complete the description of the relationships between these sets.
The known results, proved in \cite{Bi}, are collected in the following
proposition.
\begin{prop}\label{p:D}
The sets $\Sigma$, $D$ and $J$ satisfy the following properties.
\begin{itemize}
\item[(i)] $D\subseteq \Sigma$. More precisely, if $\uo$ is differentiable at $x\in\Omega$, then $x$
has a unique projection and $\Delta(x)=\{D\gauge(D\uo(x))\}$.
\item[(ii)] The set $J$ has zero Lebesgue measure.
\item[(iii)] $D$ is a $(n-1)$ rectifiable set.
\end{itemize}
\end{prop}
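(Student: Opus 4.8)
The three assertions are essentially of different natures, so I would treat them one at a time, leaning on the results already established in Sections~\ref{s:prel}--\ref{s:unique}.

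For (i), the plan is to show the contrapositive: if $x$ has multiple projections then $\uo$ cannot be differentiable at $x$. Suppose $y_1, y_2\in\proj(x)$ are distinct, giving distinct directions $d_1, d_2\in\Delta(x)$. By Lemma~\ref{l:prunica}, equation~(\ref{f:ulin}), $\uo$ grows with maximal slope along both segments $\cray{y_i, x}$, so by the argument in Corollary~\ref{c:penduno} (see~(\ref{f:linpsi})) we would have $\pscal{D\uo(x)}{d_i}=1$ for $i=1,2$ if $\uo$ were differentiable at $x$. Combined with $D\uo(x)\in\partial K$ (Theorem~\ref{t:genprop}) and the strict-convexity characterization in Theorem~\ref{t:sch}(iii), the equality $\pscal{D\uo(x)}{d_i}=\gauge(d_i)=1$ forces $D\gauge$ to select both $-y_i$-directions, i.e.\ $d_1=d_2=D\gauge(D\uo(x))$, contradicting $x\in D$. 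This simultaneously yields the refined statement $\Delta(x)=\{D\gauge(D\uo(x))\}$ at every differentiability point, hence $D\subseteq\Sigma$.

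For (ii), the zero-measure of the endpoint set $J$, I would invoke the representation of $\vf$ and the disintegration machinery from \cite{Bi}: the construction of $\vf$ in Theorem~\ref{t:exv} rests on a disintegration of $\meas$ along transport rays, and the endpoints $J=\{q(x)\}$ form the set where the ray-map degenerates. The standard argument (as in \cite{Bi,CMf}) bounds the measure of the $\delta$-neighborhood of $J$ along rays by a quantity vanishing as $\delta\to 0$, using that the length of each ray is finite and the Jacobian of the ray-flow vanishes at the endpoint. Since this is precisely the result proved in \cite{Bi}, I would cite it rather than reprove it.

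For (iii), the rectifiability of $D$, the natural route is to identify $D$ with a subset of the set where the Lipschitz function $\uo$ fails to be differentiable in a controlled way, and to exploit the structure of the multifunction $x\mapsto\Delta(x)$. Since $D\subseteq\Sigma$ by part~(i), and $\uo$ is semiconcave-like along rays, $D$ is contained in the non-differentiability set of a locally Lipschitz (indeed, locally semiconcave) function; such sets are countably $(n-1)$-rectifiable by the classical results on singular sets of semiconcave functions. I would make this precise by showing that multiplicity of projections corresponds to a jump in the gradient of $\uo$, placing $D$ inside the jump set of the $BV_{\mathrm{loc}}$ vector field $D\uo$, whose $(n-1)$-rectifiability is standard. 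The main obstacle I anticipate is establishing enough regularity of $\uo$ to legitimately apply the semiconcavity/$BV$ theory under the weak hypotheses here (where $K^0$ need not be $C^1$ and $\Omega$ is merely open, bounded and connected); I expect this is exactly the point at which one must either quote the sharp results of \cite{Bi} or carefully verify that the one-sided estimates from Lemma~\ref{l:prunica} suffice to control the gradient jumps.
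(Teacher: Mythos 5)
The first thing to note is that the paper does not actually prove Proposition~\ref{p:D}: it is introduced as a collection of ``known results, proved in \cite{Bi}'', and no proof follows the statement. Your proposal therefore does strictly more than the paper in part (i), the same in part (ii), and hits the same wall in part (iii). Your argument for (i) is correct and self-contained: by (\ref{f:ulin}) one has $\uo(x-sd)=\uo(x)-s$ for every $d\in\Delta(x)$, so differentiability yields $\pscal{D\uo(x)}{d}=1$ with $d\in\partial K^0$ and $D\uo(x)\in K$ (hence in fact $D\uo(x)\in\partial K$ --- note this pointwise fact comes from the computation in Corollary~\ref{c:penduno}, not from the a.e.\ statement of Theorem~\ref{t:genprop} that you cite), and the strict-convexity clause of Theorem~\ref{t:sch}(iii) forces $d=D\gauge(D\uo(x))$; uniqueness of the projection itself then follows because two distinct projections in $V_x$ giving the same direction would put one of them on the open segment $\oray{y,x}\subset\Omega$, contradicting that it lies on $\partial\Omega$. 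For (ii) you cite \cite{Bi}, exactly as the paper does. For (iii) your sketch contains a genuine gap that you yourself flag: under (\ref{f:hypk}) the polar set $K^0$ is strictly convex but need not be of class $C^1$, so $\uo$ is an infimum of functions that are not $C^1$, no semiconcavity is available, and no $BV$ structure for $D\uo$ has been established; the classical rectifiability results for singular sets of semiconcave functions therefore do not apply under the standing hypotheses, and the only resolution consistent with the paper is the one it adopts, namely to quote \cite{Bi} for this part as well.
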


\begin{rk}\label{r:exa}
We recall that $D=\Sigma$ if $K$ is strictly convex. Proposition \ref{p:D}(i) states that
$D\subseteq \Sigma$, in general.
The following example shows that it can happen that $D \neq \Sigma$.
Let us consider the set
\[
K^0=\left\{(x,y)\in\R^2\colon \left(|x|+\frac{\sqrt{2}}{2}\right)^2+y^2\leq 1\right\}\,
\]
and let $K$ be its polar set.
The set $K^0$ is strictly convex but it is not of class $C^1$,
hence
the set $K$ is of class $C^1$ but it is not strictly convex (see Theorem \ref{t:sch}).
Let $\Omega$ be the hexagon
\[
\Omega=\left\{(x,y)\in\R^2\colon |y|<2-|x|,\ |y|<1 \right\}
\]
(see Figure~\ref{fig:exa}).
The Lax--Hopf function $u_0$ associated to $K$ and to the boundary datum
$\phi=0$ is the distance function to the boundary
of $\Omega$ with respect to the convex metric $\pgauge$,
which can be explicitly computed as
\[
u_0(x)=
\begin{cases}
\sqrt{2}(1-|y|), & \text{if}\ (x,y)\in \Omega\cap\{|x|\leq 1\}, \\
\sqrt{2}(2-|x|-|y|), &  \text{if}\  (x,y)\in \Omega\cap\{|x|> 1\},
\end{cases}
\]
so that
$\Sigma \setminus D = \{(x,y)\in \Omega\colon\ |x|=1,\ y\neq 0\}$.
\end{rk}

\begin{figure}
\includegraphics[height=4cm]{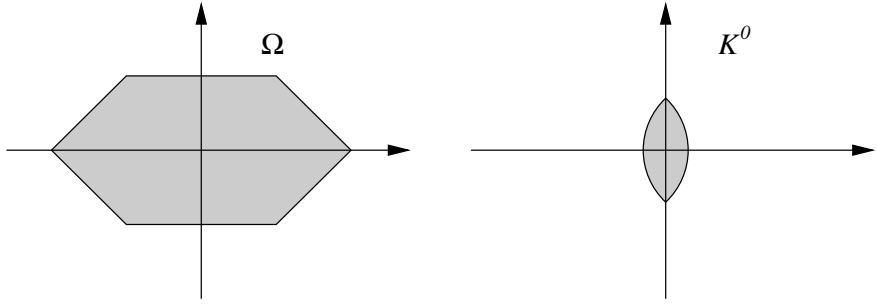}\qquad\qquad
\caption{The sets $\Omega$ and $K^0$ of Example~\ref{r:exa}}
\label{fig:exa}
\end{figure}


\begin{rk}
Since $D$ is a subset of $J$,
then by Proposition~\ref{p:D}(ii)
it has Lebesgue measure zero.
Notice that $D$ can be a proper subset of $J$
even in the regular case.
For example,
let $\Omega$ be the ellipsis $\{(x,y)\in\R^2;\ x^2/a^2+y^2/b^2 < 1\}$,
where $0< b < a$, and let $K=\overline{B}_1(0)$.
The points $P = ((a^2-b^2)/a, 0)$ and
$Q = (-(a^2-b^2)/a, 0)$ are the centers of curvature
of $\partial\Omega$ at $(a,0)$ and $(-a,0)$ respectively.
It can be checked that
$J=\cray{Q, P}$ whereas $D=\oray{Q, P}$.

Moreover, since $J$ need not be closed (see \cite[Remark 5.2]{Bi}),
$J\neq \overline{D}$ in general.
The following proposition shows that $J\subseteq \overline{D}$.
\end{rk}

\begin{prop}\label{p:BU}
For every $x\in \Omega \setminus \overline{D}$
we have that $\tau(x)>0$,
where $\tau$ is the function defined in (\ref{f:tau}).
\end{prop}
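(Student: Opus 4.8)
The plan is to argue by contradiction, assuming $\tau(x)=0$ for some $x\in\Omega\setminus\overline{D}$ and deriving a contradiction with $x\notin\overline{D}$. I first reduce the statement to the persistence of the single projection of $x$ slightly beyond $x$. Since $x\in\Omega\setminus D$, set $y=p(x)\in V_x$ and $d=d(x)$, so that $x=y+rd$ with $r=\pgauge(x-y)$ and $\pgauge(d)=1$; by Lemma~\ref{l:prunica}, $\uo(y+td)=\phi(y)+t$ for $t\in[0,r]$. For small $s>0$ the point $x_s:=x+sd=y+(r+s)d$ still lies in $\Omega$, and the segment $\oray{y,x_s}=\oray{y,x}\cup\{x\}\cup\oray{x,x_s}$ is contained in $\Omega$, so $y\in V_{x_s}$; the representation (\ref{f:LH2}) then yields $\uo(x_s)\le \phi(y)+\pgauge(x_s-y)=\uo(x)+s$. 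Thus the failure of $\tau(x)>0$ means exactly that there is a sequence $s_k\downarrow 0$ with $\uo(x_k)<\uo(x)+s_k$, where $x_k:=x+s_kd$, and conversely it suffices to exclude such a sequence in order to get $\uo(x_s)=\uo(x)+s$ on some interval $[0,\varepsilon]$ and hence $\tau(x)\ge\varepsilon>0$.

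Since $x\notin\overline{D}$, I fix a ball $B\subset\Omega$ centred at $x$ with $B\cap\overline{D}=\emptyset$, so that every $x_k\in B$ has a unique projection $p_k:=p(x_k)$ and direction $d_k:=d(x_k)$, with $\uo(x_k)=\phi(p_k)+\pgauge(x_k-p_k)$; the strict inequality forces $p_k\neq y$. I also record that $d_k\neq d$: if $d_k=d$ then $p_k$ would lie on the line through $y$ and $x$ in direction $-d$, and the first exit point of this line from $\Omega$ going backward from $x_k$ is $y$ itself, forcing $p_k=y$, a contradiction.

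By compactness of $\partial\Omega$ I extract $p_k\to\bar p\in\partial\Omega$. Along each transport ray one has $\uo(w)=\phi(p_k)+\pgauge(w-p_k)$ for $w\in\cray{p_k,x_k}$, and since $\cray{p_k,x_k}\to\cray{\bar p,x}$ this linear profile passes to the limit, giving $\uo(w)=\phi(\bar p)+\pgauge(w-\bar p)$ for every $w\in\cray{\bar p,x}$. Letting $z$ be the last point of $\cray{\bar p,x}\cap\partial\Omega$ before $x$, so that $z\in V_x$ and $\oray{z,x}\subset\Omega$, and combining $\uo(z)=\phi(z)$ (boundary datum) with $\uo(z)=\phi(\bar p)+\pgauge(z-\bar p)$ and the additivity $\pgauge(x-\bar p)=\pgauge(z-\bar p)+\pgauge(x-z)$ (valid since $\bar p,z,x$ are collinear and positively ordered), I obtain $\uo(x)=\phi(z)+\pgauge(x-z)$, that is $z\in\Pi(x)$. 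As $x\notin D$, $\Pi(x)=\{y\}$, whence $z=y$; since $z\in\cray{\bar p,x}$ this forces $(x-\bar p)/\pgauge(x-\bar p)=d$, so $d_k\to d$. (Run for arbitrary sequences in $\Omega\setminus D$ converging to a point of $\Omega\setminus\overline{D}$, this same computation shows that the direction field $d(\cdot)$ is continuous on $\Omega\setminus\overline{D}$.)

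It remains to contradict the data $p_k\to y$, $d_k\to d$ with $d_k\neq d$, and $a_k:=\uo(x)+s_k-\uo(x_k)>0$. Here I would exploit the no-crossing property of transport rays, Corollary~\ref{l:project}, which prevents $\oray{p_k,x_k}$ from meeting the interior of $\oray{y,x}$, together with strict convexity of $K^0$ in the form of Lemma~\ref{l:geoint}, which identifies $\cray{p_k,x_k}$ as the unique $d_L$-geodesic from $p_k$ to $x_k$ carrying the unit-slope linear growth of $\uo$ up to $x_k$. Since $x_k=y+(r+s_k)d$ lies on the straight prolongation of $\cray{y,x}$, a careful comparison of the two linear profiles as the ray $\cray{p_k,x_k}$ collapses onto $\cray{y,x}$ should force $a_k\to0$ at a rate incompatible with the strict loss, yielding $\uo(x+sd)=\uo(x)+s$ for all small $s$ and hence $\tau(x)>0$. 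Quantifying this collapse, and in particular controlling it in the presence of the \emph{asymmetry} of $\pgauge$ (so that the backward norm $\pgauge(-d)$ does not spoil the comparison), is the main obstacle of the proof.
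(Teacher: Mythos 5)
Your reduction of the claim to the identity $\uo(x+sd)=\uo(x)+s$ for small $s$, and your limiting analysis of the projections in step 3 (which in effect reproves the continuity of $d(\cdot)$ on $\Omega\setminus\overline{D}$ that the paper imports from Bianchini), are sound. But the final step is a genuine gap, not a technicality: the configuration you arrive at ($p_k\to y$, $d_k\to d$ with $d_k\neq d$, and $a_k>0$) is simply not contradictory, and no comparison of the two linear profiles along the single forward sequence $x_k=x+s_kd$ can make it so. Indeed, even granting that $p_k$ is visible from $x$, the two relations $\phi(y)+\pgauge(x-y)=\uo(x)\leq\phi(p_k)+\pgauge(x-p_k)$ and $\phi(p_k)+\pgauge(x_k-p_k)=\uo(x_k)<\phi(y)+\pgauge(x_k-y)$ are jointly equivalent to $\pgauge(x_k-p_k)-\pgauge(x-p_k)<s_k$, and this last inequality is exactly what the strict convexity of $K^0$ (Theorem~\ref{t:sch}(ii)) \emph{guarantees} whenever $x-p_k$ is not parallel to $d$ --- which is forced by $d_k\neq d$. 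So strict convexity works against you here rather than for you. The no-crossing property does not rescue the argument either: taking $p_k=y+\epsilon_k e$ with $e$ transversal to $d$, the segment from $p_k$ to $x_k$ meets the line through $y$ and $x$ only at $x_k$ itself, hence never crosses $\oray{y,x}$. Nothing seen along the single direction $d$ excludes the bad sequence, and $a_k\to 0$ holds automatically by continuity of $\uo$, so there is no ``rate'' to contradict.

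The missing ingredient is a topological argument that exploits a full neighbourhood of $x$, and this is precisely how the paper proceeds. After normalizing $x=0$ and $d(0)=-\lambda e_n$, it uses the continuity of $d$ on $\Omega\setminus\overline{D}$ to define a continuous map $\psi$ on a small ball, sending $z$ to the point of the line $\{z-td(z):\ t\geq 0\}$ lying on the fixed hyperplane $\{\pscal{\cdot}{e_n}=2\delta\}$; Corollary~\ref{l:project} gives $d(\psi(z))=d(z)$. The Borsuk--Ulam theorem applied to $\psi$ restricted to $\partial B_\delta(0)$ produces antipodal points $w,-w$ with $\psi(w)=\psi(-w)$, whence $d(w)=d(-w)$ and $w$ is parallel to $d(w)$; the transport ray through $w$ and $-w$ therefore contains the origin and extends beyond it, giving $\tau(0)\geq\delta$. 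To complete your scheme you would have to replace your last step by an argument of this degree-theoretic kind (or by curvature estimates, which are available only in the smooth setting).
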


\begin{proof}
Let us fix $x \in \Omega \setminus \overline{D}$.
We can assume without loss of generality that $x=0$,
and $d(0)=-\lambda e_n$, $\lambda>0$.
Since the map $z \mapsto d(z)$ is continuous in the open set $\Omega \setminus \overline{D}$
(see \cite[Prop.~3.2]{Bi}), there exists $r>0$ such that
$\overline{B}_r(0)\subseteq \Omega \setminus \overline{D}$,
and  $\pscal{d(z)}{e_n}<-\lambda/2$ for every $z\in \overline{B}_r(0)$.

Given $\delta\in (0,r/2)$,
let us define the function $\psi\colon \overline{B}_r(0) \to \R^n$ by
\begin{equation}\label{f:bu}
\psi(z) := z + \left(\frac{2\delta-\pscal{z}{e_n}}{\pscal{d(z)}{e_n}}\right)d(z)\,,
\end{equation}
so that $\pscal{\psi(z)}{e_n}=2\delta$.
Hence $\psi$ maps $\overline{B}_r(0)$ into the hyperplane
$H := \{x\in\R^n: \pscal{x}{e_n} = 2\delta\}$.
The continuity of $d(z)$ in $\overline{B}_r(0)$
ensures that we can choose $\delta$ in such a way that
$\psi(z)\in \overline{B}_r(0)$ for every $z \in \overline{B}_\delta(0)$.

Summarizing, $\psi\colon \overline{B}_r(0)\to H$ is a continuous map,
$\psi(\overline{B}_{\delta}(0)) \subset \overline{B}_r(0)$,
and $\psi(z)$ belongs to the ray $\{z-t d(z): t\geq 0\}$.
Moreover, for every $z\in \overline{B}_{\delta}(0)$,
we have that $\cray{\psi(z),z}\subset \overline{B}_r(0)\subset\Omega$,
hence $\psi(z)\in \oray{p(z),z}$ and,
by Corollary~\ref{l:project}, we infer that $d(z)= d(\psi(z))$.

Since $\psi$ maps $\partial B_{\delta}(0)$
into the $(n-1)$-dimensional hyperplane $H$,
by the Borsuk--Ulam lemma (see \cite[Cor.~4.2]{Dei})
there exists $w\in \partial B_{\delta}(0)$ such that
$\psi(w)=\psi(-w)$.
Furthermore
$d(w) = d(\psi(w)) = d(\psi(-w)) = d(-w)$, so that
the equality $\psi(w)=\psi(-w)$ implies that
\[
w = \frac{\pscal{w}{e_n}}{\pscal{d(w)}{e_n}}\, d(w),
\]
i.e., $w$ is parallel to $d(w)$.
But the transport ray through $w$ and $-w$ contains the origin, hence
$d(w) = d(-w) = d(0)$ and $\tau(0) \geq \delta$.
%
%
\end{proof}

For the reader's convenience we collect here the results
concerning the relationships between the singular sets.

\begin{cor}
The sets $\Sigma$, $D$ and $J$ have zero Lebesgue measure.
Moreover, $D\subset\Sigma$ and $D\subset J\subset\overline{D}$,
with possibly strict inclusions.
\end{cor}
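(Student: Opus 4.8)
The plan is to assemble the statement from results already in hand, the only genuinely new ingredient being the inclusion $J\subseteq\overline{D}$, which I would deduce from Proposition~\ref{p:BU}. Everything else is a matter of quoting the propositions and reading off the definitions.

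First I would dispose of the negligibility claims and the easy inclusions. Since $\uo\in\spaceX\subset W^{1,\infty}(\Omega)$ is locally Lipschitz, Rademacher's Theorem shows that $\Sigma$, its set of non-differentiability points, has zero Lebesgue measure. The inclusion $D\subseteq\Sigma$ is exactly Proposition~\ref{p:D}(i), and Proposition~\ref{p:D}(ii) gives that $J$ is Lebesgue-negligible. For $D\subseteq J$ I would simply read off definitions (\ref{f:tau})--(\ref{f:qj}): if $x\in D$ then $\tau(x)=0$, hence $q(x)=x$, so that $x\in J$; in particular $D$ has zero Lebesgue measure as well, being contained in $\Sigma$ (or in $J$).

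The substantive step is $J\subseteq\overline{D}$, and here I would argue by contradiction. Take $w\in J$, write $w=q(x_0)$ for some $x_0\in\Omega$, and suppose $w\notin\overline{D}$. Then $x_0\notin D$ (otherwise $q(x_0)=x_0=w\in D$), so $w=x_0+\tau(x_0)\,d(x_0)$ lies on the transport ray $\cray{p(x_0),q(x_0)}$. Evaluating (\ref{f:ulin2}) at $z=w$, and noting that $\oray{p(x_0),w}\subset\Omega$, I would check that $p(x_0)\in\proj(w)$; since $w\notin D$ its direction is unique, and because $w-p(x_0)$ is a positive multiple of $d(x_0)$ this forces $d(w)=d(x_0)$, while $\uo(w)=\uo(x_0)+\tau(x_0)$. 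On the other hand, $w\notin\overline{D}$ lets me invoke Proposition~\ref{p:BU} to conclude $\tau(w)>0$, i.e.\ $\uo$ grows with maximal slope along $\cray{w,\,w+\tau(w)\,d(w)}$. Patching the two maximal-slope segments, which meet at $w$ and share the direction $d(x_0)=d(w)$, would give $\uo(x_0+s\,d(x_0))=\uo(x_0)+s$ for all $s\in[0,\tau(x_0)+\tau(w)]$, contradicting the definition (\ref{f:tau}) of $\tau(x_0)$ as the supremum of such lengths.

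The main obstacle is precisely this last step: correctly identifying the projection and the direction at the endpoint $w=q(x_0)$ so that the two linear pieces of $\uo$ fit together into one longer maximal-slope segment. This is where the uniqueness of the direction at $w$ (guaranteed by $w\notin D$) and the representation (\ref{f:ulin2}) of $\uo$ along the ray are essential. The strictness of all three inclusions requires no further work: $D\subsetneq\Sigma$ is witnessed by the hexagon of Remark~\ref{r:exa}, while $D\subsetneq J$ and $J\subsetneq\overline{D}$ are illustrated, respectively, by the ellipse example and by the non-closedness of $J$ recorded in the preceding remarks.
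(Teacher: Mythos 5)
Your proposal is correct and follows essentially the same route as the paper: the measure and inclusion statements are read off from Rademacher's Theorem, Proposition~\ref{p:D}, and the definitions, while $J\subseteq\overline{D}$ is deduced from Proposition~\ref{p:BU} by concatenating the two maximal-slope segments at $w=q(x_0)$ (using Corollary~\ref{l:project} to identify $\proj(w)=\{p(x_0)\}$ and hence $d(w)=d(x_0)$) to contradict the maximality of $\tau(x_0)$. This is exactly the argument the paper leaves implicit when it announces that Proposition~\ref{p:BU} ``shows that $J\subseteq\overline{D}$,'' and your attributions of the strict inclusions to the hexagon, the ellipse, and the non-closedness of $J$ match the paper's remarks.
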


We are now in a position to briefly recall the explicit
representation of the function $v_f$ appearing in (\ref{f:dive}),
one of the main results in \cite{Bi}.

\begin{teor}\label{f:bian}
There exists a positive function $\alpha\colon\Omega\to (0,+\infty)$,
absolutely continuous along almost every transport ray,
such that an explicit solution $\vf\in L^1_+(\Omega)$ of~(\ref{f:dive})
is given by
\begin{equation}\label{f:vf}
\vf(x)=
\int_0^{\tau(x)} f(x+t d(x))\, \frac{\alpha(x+t d(x))}{\alpha(x)}\, dt,
\qquad\text{a.e.}\ x\in \Omega\,.
\end{equation}
More precisely, for a.e.\ $x\in\Omega\setminus D$ the function $\vf$
is locally absolutely continuous along the ray
$t\mapsto x+t d(x)$, $t\in [0,\tau(x))$, and satisfies
\begin{equation}\label{f:odev}
\begin{cases}
\displaystyle
\frac{d}{dt}\left[
\vf(x+t d(x))\, \alpha(x+t d(x))\right]
= - f(x+t d(x))\, \alpha(x+t d(x)),\\
\displaystyle
\lim_{t\to \tau(x)^-} \vf(x+t d(x))\, \alpha(x+t d(x)) = 0.
\end{cases}
\end{equation}
\end{teor}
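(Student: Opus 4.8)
The plan is to exploit that, away from the $\meas$--negligible set $J$ (Proposition~\ref{p:D}(ii)), the set $\Omega$ is partitioned into the relative interiors of the transport rays. Along each such ray $\uo$ grows linearly with unit slope in the $\pgauge$--metric, the direction field is locally constant, and by Proposition~\ref{p:D}(i) together with Theorem~\ref{t:sch}(iii) it coincides a.e.\ with $D\gauge(D\uo)$. The idea is then to disintegrate the Lebesgue measure along this one--dimensional foliation, so that the divergence equation (\ref{f:dive}) decouples into a family of one--dimensional linear transport equations, one for each ray, which can be integrated explicitly.

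Concretely, I would first fix a measurable parametrization of the rays, writing a ray as $t\mapsto\gamma(t)=\gamma(0)+t\,d$ with $\pgauge(d)=1$, the parameter $t$ ranging over the (bounded) length of the ray, $\gamma(0)$ a boundary point and the terminal point $\gamma(\tau)\in J$. The crucial object is a positive density $\alpha\colon\Omega\to(0,+\infty)$ realizing the disintegration
\[
\int_\Omega g\, d\meas = \int\left(\int_0^{\tau} g(\gamma(t))\,\alpha(\gamma(t))\,dt\right)d\lambda,
\]
where $\lambda$ is the measure induced on a transversal. Establishing that this disintegration exists with conditional measures absolutely continuous with respect to $\mathcal{H}^1$ along the rays --- equivalently, that such a positive $\alpha$ exists and is absolutely continuous along almost every ray, exactly as asserted in the statement --- is the heart of the matter. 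In the present non--smooth framework this cannot be extracted from differential geometry and is instead the genuinely hard measure--theoretic result of \cite{Bi}: it rests on the strict convexity of $K^0$ (Theorem~\ref{t:sch}(ii)), which prevents the rays from crossing in the interior away from $J$ (Corollary~\ref{l:project}), and on a careful analysis of the regularity of the direction field and of the Jacobian of the ray map.

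Granting the disintegration, the remainder is routine. Inserting $D\gauge(D\uo)=d$ into the weak formulation (\ref{f:weak}) and recognizing $\pscal{d}{D\varphi(\gamma(t))}$ as $\frac{d}{dt}[\varphi(\gamma(t))]$, the disintegration rewrites the left--hand side of (\ref{f:weak}) as an integral over rays of $\int_0^{\tau}\vf(\gamma(t))\,\alpha(\gamma(t))\,\frac{d}{dt}[\varphi(\gamma(t))]\,dt$. Integrating by parts in $t$ and using that $\varphi\in C^\infty_0(\Omega)$ vanishes at the boundary endpoint $\gamma(0)$, the validity of (\ref{f:weak}) for every test function forces, on almost every ray, the pointwise identity
\[
\frac{d}{dt}\bigl[\vf(\gamma(t))\,\alpha(\gamma(t))\bigr] = -f(\gamma(t))\,\alpha(\gamma(t)),
\]
together with the terminal condition $\vf\,\alpha\to 0$ as $t\to\tau$, which is precisely what annihilates the remaining boundary term at the endpoint in $J$ (consistently with the fact that every admissible $v$ must vanish at the final points of the rays). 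Integrating this linear ODE backward from the endpoint yields the representation (\ref{f:vf}), and the disintegration gives at once $\vf\in L^1_+(\Omega)$, since by Tonelli $\int_\Omega\vf\,d\meas\le C\int_\Omega f\,d\meas<+\infty$, with $C$ controlled by the diameter of $\Omega$.

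The main obstacle, as indicated, is the construction of the density $\alpha$ and the proof that the conditional measures are absolutely continuous; the subsequent ODE manipulation and the integrability estimate are entirely formal once the disintegration is in hand.
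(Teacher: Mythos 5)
The paper does not actually prove this theorem: its entire ``proof'' is the citation ``See \cite{Bi}, Theorems~5.7 and~7.1.'' Your outline is a faithful sketch of the strategy of that reference --- disintegration of the Lebesgue measure along the ray foliation, reduction of the divergence equation to a one--dimensional linear ODE on each ray, backward integration from the endpoint in $J$ --- and you correctly identify the construction of the positive density $\alpha$ with absolutely continuous conditional measures as the genuinely hard step. But you, like the paper, ultimately defer exactly that step to \cite{Bi}, so your proposal is not a self--contained proof; it is an (accurate) expansion of the same external appeal the authors make. Given that the statement is presented in the paper as an imported result rather than one proved there, this is consistent with the paper's treatment, and the parts you do carry out (the ODE manipulation, the terminal condition killing the boundary term at $q(x)\in\Omega$, and the Tonelli estimate giving $\vf\in L^1_+(\Omega)$) are correct modulo the disintegration.
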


\begin{proof}
See \cite{Bi}, Theorems~5.7 and~7.1.
\end{proof}

\begin{rk}
The reader may want to compare the representation formula~(\ref{f:vf})
with the one proved in the regular homogeneous case
(see \cite{CaCa,CCCG,CMf}),
where $\alpha(x)$ can be explicitly written in terms of the
distance from the boundary $u_0(x)$ and of the anisotropic
curvatures $\kappa_1,\ldots,\kappa_{n-1}$ of $\partial\Omega$
at $p(x)$.
More precisely, one has
\[
\alpha(x) = \prod_{j=1}^{n-1}
\left[1-u_0(x)\, \kappa_j(p(x))\right],
\qquad x\in\Omega\setminus D\,.
\]
\end{rk}


We shall find a necessary and sufficient condition for having that $\vf$ is the unique solution
to (\ref{f:divei}). The following example shows why the uniqueness of the $v$-component may fail.

\begin{ehse}\label{e:square}
Let $\Omega=(0,1)\times(0,1) \subseteq \R^2$, $K=\overline{B}_1(0)$,
and let $\phi(x,y)=y$ on $\partial \Omega$, so that
$\uo(x,y)=y$ in $\Omega$.
Then
every function
of the form $v(x,y)=1-y+c(x)$, $c(x)\geq 0$ for $x\in[0,1]$,
is a non-negative solution of
the equation $-\dive(v\, D\uo)=1$.
Notice that in this case $\vf(x,y)=1-y$ in $\Omega$,
so that~(\ref{f:divei}) has infinitely
many solutions of the form $v=\vf+\tilde{v}$,
with $\dive(\tilde{v}\, D\uo)=0$.
The existence of non-trivial variations of $\vf$
is due to the fact that $\Omega$ is covered by transport rays
with both endpoints on $\partial\Omega$.
\end{ehse}

Now it should be clear that, in order to discuss the uniqueness of
the $v$--component of the solutions to~(\ref{f:divei}),
we need to introduce the set
\[
T := \bigcup_{(p,q)\in E} \oray{p,q}\,,
\]
where the set $E$ is defined by
\[
E := \{(p,q)\in \partial \Omega \times \partial \Omega;\ p\neq q,\ \oray{p,q}\subseteq \Omega,\
\phi(q)=\phi(p)+\pgauge(q-p)\}.
\]

In Example~\ref{e:square}, $E=\{((x,0),(x,1)),\ x\in (0,1)\}$ and $T=\Omega$.
In general $T$ is the
union of the transport rays ending on $\partial\Omega$. On this set every function in $\spaceX$ is forced
to have maximal slope and hence to coincide with $\uo$.
\begin{lemma}\label{l:sinray}
Let $(p,q)\in E$ be given. Then the following hold.
\begin{itemize}
\item[i)] $\proj(x)=\{p\}$ and $q(x)=q$ for every $x\in \oray{p,q}$.
\item[ii)]
If $u\in \spaceX$,
then $u(x)=\phi(p)+\pgauge(x-p)$ for every $x\in\cray{p,q}$.
As a consequence every $u\in\spaceX$ coincides with $\uo$ on the set $T$.
\end{itemize}
\end{lemma}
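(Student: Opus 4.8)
The plan is to prove the value identity in (ii) first, since it is self-contained and immediately yields the projection statement in (i). Fix $(p,q)\in E$ and $x\in\oray{p,q}$. Writing $x=(1-t)p+tq$ with $t\in(0,1)$, the positive homogeneity of $\pgauge$ gives the additivity relation $\pgauge(x-p)+\pgauge(q-x)=\pgauge(q-p)$, and from $\oray{p,x},\oray{x,q}\subset\oray{p,q}\subset\Omega$ we get $p,q\in V_x$. Now take any $u\in\spaceX$. By Lemma~\ref{l:diseqrho} the function $u$ is locally $1$-Lipschitz with respect to $\pgauge$, so letting interior points of $\oray{p,q}$ tend to the endpoints and using $u\in C(\overline\Omega)$ we obtain $u(x)-u(p)\le\pgauge(x-p)$ and $u(q)-u(x)\le\pgauge(q-x)$. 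Since $u=\phi$ on $\partial\Omega$ and $(p,q)\in E$, we have $u(p)=\phi(p)$ and $u(q)=\phi(q)=\phi(p)+\pgauge(q-p)$. The second inequality then forces $u(x)\ge\phi(p)+\pgauge(q-p)-\pgauge(q-x)=\phi(p)+\pgauge(x-p)$, while the first gives the reverse bound $u(x)\le\phi(p)+\pgauge(x-p)$. Hence $u(x)=\phi(p)+\pgauge(x-p)$ for every $x\in\cray{p,q}$, which is~(ii); applying this to $u=\uo$ shows in particular that every $u\in\spaceX$ agrees with $\uo$ on $T$.

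For (i), specializing the identity to $u=\uo$ gives $\uo(x)=\phi(p)+\pgauge(x-p)$ with $p\in V_x$, so $p\in\proj(x)$. To prove that $p$ is the \emph{unique} projection, I would argue by contradiction, following the scheme of Corollary~\ref{l:project}. Suppose $y\in\proj(x)$ with $y\neq p$, and consider the path $\cray{y,x}\cup\cray{x,q}$ joining $y$ to $q$, which lies in $\Omega$ except at its endpoints and has $\pgauge$-length $\pgauge(x-y)+\pgauge(q-x)$. If this path is not a segment, then by Lemma~\ref{l:geoint} it is not a geodesic, so $d_L(y,q)<\pgauge(x-y)+\pgauge(q-x)$; combined with the Lax--Hopf bound $\uo(q)\le\phi(y)+d_L(y,q)$, the relations $\phi(y)+\pgauge(x-y)=\uo(x)=\phi(p)+\pgauge(x-p)$ and $\pgauge(x-p)+\pgauge(q-x)=\pgauge(q-p)$ yield $\uo(q)<\phi(p)+\pgauge(q-p)=\uo(q)$, a contradiction. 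If instead the path is a segment, then $p$, $x$, $q$ being collinear forces $y$ to lie on the line through $p,q$ on the far side of $x$ from $q$; but then $p\in\oray{y,x}\subset\Omega$ (as $y\in V_x$), contradicting $p\in\partial\Omega$, unless $y=p$. Thus $\proj(x)=\{p\}$, whence $\Delta(x)$ is a singleton, $x\in\Omega\setminus D$, $p(x)=p$ and $d(x)=(q-p)/\pgauge(q-p)$.

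It remains to identify $q(x)=q$. Since~(ii) gives $\uo(x+s\,d(x))=\phi(p)+\pgauge(x-p)+s=\uo(x)+s$ for all $s\in[0,\pgauge(q-x)]$ (the segment from $x$ to $q$), the definition~(\ref{f:tau}) of $\tau$ yields $\tau(x)\ge\pgauge(q-x)$, so $q$ lies on the transport ray $\cray{x,q(x)}$. On the other hand, by Corollary~\ref{l:project} the open ray $\oray{p(x),q(x)}=\oray{p,q(x)}$ consists of points of $\Omega$ (their projections being defined there), so it cannot contain the boundary point $q\in\partial\Omega$. Therefore $q(x)=q$, completing~(i).

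The main obstacle I anticipate is this last step: in a non-convex $\Omega$ the segment issuing from $x$ could in principle re-enter $\Omega$ beyond the boundary point $q$, so ruling out $q(x)\neq q$ cannot be purely geometric and must rely on the fact---encoded in Corollary~\ref{l:project}---that the interior of a transport ray stays in $\Omega$. The uniqueness argument in (i) is otherwise a direct adaptation of the geodesic comparison already used in Corollary~\ref{l:project}, and the value identity in (ii) is an elementary Lipschitz sandwich.
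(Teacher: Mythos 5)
Your proof is correct and follows essentially the same route as the paper: the two-sided Lipschitz estimate from Lemma~\ref{l:diseqrho} pins down $u$ on $\cray{p,q}$, and the uniqueness of the projection is obtained by the same alignment/geodesic comparison used in Corollary~\ref{l:project}. You merely reverse the order (proving (ii) first and reading off $p\in\proj(x)$ from it) and spell out the identification $q(x)=q$ via Corollary~\ref{l:project}, a step the paper dismisses as clear; both refinements are sound.
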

\begin{proof}
Let us start by proving that $p\in\proj(x)$ for every $x\in \oray{p,q}$. Namely
\[
\begin{split}
\uo(x) & \leq \phi(p)+\pgauge(x-p)=\phi(q)-\pgauge(q-p)+\pgauge(x-p) \\
& =\phi(q)-\pgauge(q-x)\leq \uo(x)\,,
\end{split}
\]
where the last inequality follows from Lemma~\ref{l:diseqrho}.
Hence $\uo(x)=\phi(p)+\pgauge(x-p)$, so that $p\in\proj(x)$.

Actually we can prove that $\Pi(x)=\{p\}$ for every $x\in \oray{p,q}$
using the same argument of the proof of Corollary~\ref{l:project}:
if there exists $p_1\neq p$, $p_1\in \Pi(x)$,
then the three points $p_1$, $x$ and $q$ cannot be aligned, so that
\[
\phi(q) - \phi(p_1) < \pgauge(q-x) + \pgauge(x-p_1)
\]
and
\[
\uo(x) = \phi(p_1)+\pgauge(x-p_1) > \phi(q)-\pgauge(q-x)
= \phi(p) + \pgauge(x-p) = \uo(x),
\]
a contradiction.

Since $q\in\partial\Omega$ and, for every
$x\in\oray{p,q}$, $\proj(x) = \{p\}$, it is clear that $q(x) = q$.

Finally, let $u \in \spaceX$ and $x\in\cray{p,q}$ be given. Then,
using Lemma~\ref{l:diseqrho} we get
\[
\phi(p)+\pgauge(x-p)=\phi(q)-\pgauge(q-x) \leq u(x)\leq \phi(p)+\pgauge(x-p)\,,
\]
and (ii) follows.
\end{proof}

\begin{teor}\label{t:uniqv}
The function $\vf$ defined in ~(\ref{f:vf})
is the unique solution of (\ref{f:divei}) if and only if the set
$T$ has zero Lebesgue measure.
\end{teor}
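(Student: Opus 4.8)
The plan is to read (\ref{f:divei}) as a linear transport equation along the transport rays of $\uo$ and to reduce the question to the homogeneous equation for the difference $\tilde v := v - \vf$. By Proposition~\ref{p:D}(i) one has $D\gauge(D\uo(x)) = d(x)$ for a.e.\ $x\in\Omega$, so that $\pscal{D\gauge(D\uo(x))}{D\varphi(x)}$ is the derivative of $\varphi$ in the ray direction $d(x)$. I would use the disintegration of the Lebesgue measure along the rays with density $\alpha$ (the tool from \cite{Bi} underlying Theorem~\ref{f:bian}): parametrizing the rays by a base set $\Gamma\subset\partial\Omega$ with directions $\omega(p)=d$ and lengths $\ell(p)$, one has, for $g\in L^1(\Omega)$,
\[
\int_\Omega g\, dx = \int_\Gamma \left( \int_0^{\ell(p)} g(p+s\,\omega(p))\, \alpha(p+s\,\omega(p))\, ds \right) d\lambda(p),
\]
for a suitable measure $\lambda$ on $\Gamma$. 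Any $v\in L^1_+(\Omega)$ solving (\ref{f:divei}) satisfies the ODE (\ref{f:odev}) along a.e.\ ray; since $\vf$ does too, the difference satisfies the homogeneous version, i.e.\ $\tilde v\,\alpha$ is constant along a.e.\ ray, $\tilde v(p+s\,\omega) = c(p)/\alpha(p+s\,\omega)$ for some ray-constant $c(p)$.

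For the sufficiency (if $T$ is $\meas$-null then $\vf$ is unique), I would show that $c(p)$ vanishes for a.e.\ ray. By \cite{Bi} every admissible $v$, and in particular $\vf$, vanishes at the endpoint $q(x)$ whenever $q(x)\in\Omega$; hence $c(p)=0$ for every ray whose interior endpoint lies in $\Omega$. It then remains to check that a.e.\ ray ends in $\Omega$. If instead $q(x)\in\partial\Omega$ for some $x\in\Omega\setminus D$, then $p(x),q(x)\in\partial\Omega$, $\oray{p(x),q(x)}\subset\Omega$, and (\ref{f:ulin2}) with $z=q(x)$ together with $\uo=\phi$ on $\partial\Omega$ give $\phi(q(x))=\phi(p(x))+\pgauge(q(x)-p(x))$, so $(p(x),q(x))\in E$ and $x\in T$. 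Since $T$ and $D$ are $\meas$-null, a.e.\ $x$ lies on a ray ending in $\Omega$, whence $\tilde v=0$ a.e.\ and $v=\vf$.

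For the necessity (if $\meas(T)>0$ then $\vf$ is not unique), I would construct a nontrivial homogeneous solution supported on $T$. Since $T$ has positive measure, the measure $\lambda$ restricted to the rays with both endpoints on $\partial\Omega$ is nontrivial; pick a bounded measurable weight $h\geq 0$, $h\not\equiv 0$ on this ray set with $h\,\ell\in L^1(\lambda)$, and set $\tilde v(p+s\,\omega):=h(p)/\alpha(p+s\,\omega)$ on $T$ and $\tilde v:=0$ elsewhere. Then $\tilde v\in L^1_+(\Omega)$, $\tilde v\not\equiv 0$, and the disintegration applied to $g=\tilde v\,\pscal{d}{D\varphi}$ gives, for every $\varphi\in C^\infty_0(\Omega)$,
\[
\int_\Omega \tilde v\, \pscal{D\gauge(D\uo)}{D\varphi}\, dx = \int_\Gamma h(p) \int_0^{\ell(p)} \frac{d}{ds}\varphi(p+s\,\omega)\, ds\, d\lambda(p) = \int_\Gamma h(p)\,[\varphi(q) - \varphi(p)]\, d\lambda(p) = 0,
\]
since the endpoints $p$ and $q=p+\ell(p)\omega$ lie on $\partial\Omega$, where $\varphi$ vanishes. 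Hence $v:=\vf+\tilde v\in L^1_+(\Omega)$ is a solution of (\ref{f:divei}) distinct from $\vf$.

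The main obstacle I anticipate is the measure-theoretic bookkeeping: making the parametrization of the rays of $T$ measurable and justifying the disintegration with density $\alpha$ and the ODE (\ref{f:odev}) for a general solution $v$ (not just for $\vf$). This machinery is available from \cite{Bi}, being precisely what yields the representation (\ref{f:vf}); once it is in place, the decisive and elementary point is the fundamental-theorem-of-calculus computation above, where the compact support of $\varphi$ combined with the fact that the rays of $T$ terminate on $\partial\Omega$ forces the boundary terms to cancel.
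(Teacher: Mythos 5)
Your proof is correct in substance. The sufficiency direction (if $\meas(T)=0$ then $\vf$ is unique) follows the same route as the paper: both reduce to the ODE (\ref{f:odev}) satisfied by $v\cdot\alpha$ along a.e.\ ray together with the vanishing condition at interior endpoints, both of which are imported from \cite{Bi}; your added observation that a ray with $q(x)\in\partial\Omega$ satisfies $\phi(q(x))=\phi(p(x))+\pgauge(q(x)-p(x))$ and hence lies in $T$ is exactly the point that makes $T$ the right set to look at. The necessity direction, however, is where you genuinely diverge. You build the nontrivial homogeneous solution by hand, setting $\tilde v = h(p)/\alpha$ along the rays of $T$ and verifying $\int_\Omega \tilde v\,\pscal{D\gauge(D\uo)}{D\varphi}=0$ through the disintegration formula and the fundamental theorem of calculus, the boundary terms dying because both endpoints lie on $\partial\Omega$. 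The paper instead avoids any explicit use of the disintegration at this stage: it introduces the backward Lax--Hopf function $\wo$ for the reversed geometry $(-K,-\phi)$, applies the existence result (Theorem~\ref{t:exv}) twice with source $g=\chi_T$ to get $v_g^+$ and $v_g^-$, notes that $D\gauge_-(D\wo)=-D\gauge(D\uo)$ on $T$, and concludes that $v_g^++v_g^-$ solves the homogeneous equation by cancellation of the two sources. The two constructions actually produce the same function up to the choice of $h$ (the paper's sum corresponds to $h(p)=\int_0^{\ell(p)}\alpha$), so the difference is purely in the verification: the paper's trick delegates all measure-theoretic bookkeeping to the black box of Theorem~\ref{t:exv}, whereas your argument must explicitly justify the measurable parametrization of the rays of $T$, the disintegration with density $\alpha$, and the a.e.\ identity $D\gauge(D\uo)=d$ --- all available in \cite{Bi}, as you note, but constituting a real technical overhead that the paper's formulation deliberately sidesteps. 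Your version has the compensating virtue of making transparent \emph{why} positive measure of $T$ destroys uniqueness (mass can circulate along rays that never terminate inside $\Omega$), which the paper's cancellation argument somewhat obscures.
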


\begin{proof}
The uniqueness of $\vf$ when $T$ has zero Lebesgue measure
is proved in \cite[Prop.~7.3]{Bi}.
The proof is based on the fact that every weak solution
$v\in L^1_+(\Omega)$ to~(\ref{f:divei})
is locally absolutely continuous along almost every transport ray,
and satisfies
\[
\frac{d}{dt}\left[
v(x+t d(x))\, \alpha(x+t d(x))\right]
= - f(x+t d(x))\, \alpha(x+t d(x))\,.
\]
Moreover, if the endpoint $q(x)$ of the transport ray through $x$
belongs to $\Omega$, the $v\cdot\alpha$
satisfies the initial condition
\[
\lim_{t\to \tau(x)^-} \vf(x+t d(x))\, \alpha(x+t d(x)) = 0
\]
so that, by Theorem~\ref{f:bian}, $v=\vf$ along that ray.

Let us now assume that $T$ has positive Lebesgue measure.
It is clear that the function
\[
\wo(x) := \min \{-\phi(y)+\pgauge(y-x):\ y\in V_x\},\qquad x\in\overline{\Omega}\,,
\]
is the Lax-Hopf function corresponding to the
geometry induced by the convex set $-K$ and the boundary datum $-\phi$.
Assume that $\uo$ and $\wo$ are differentiable at a point $x\in\oray{p,q}$, with $(p,q)\in E$.
{}From Proposition~\ref{p:D} we have that
$D\uo(x) = (q-p) / \pgauge(q-p)$.
The same argument, used with $-\phi$ as boundary datum and the geometry induced by $-K$,
shows that
\[
D\gauge_-(D\wo(x)) = \frac{p-q}{\pgauge_-(p-q)} = - \frac{q-p}{\pgauge(q-p)} =
D\gauge(D\uo(x)),
\]
where $\gauge_-$ and $\pgauge_-$ denote respectively the gauge functions of the convex sets
$-K$ and $-K^0$.

Let us consider the indicator function of the set $T$,
\[
g(x) :=
\begin{cases}
1, & x\in T, \\
0 ,& x\in \Omega \setminus T.
\end{cases}
\]
By Theorem \ref{t:exv},
there exist two nonnegative functions $v_g^+$, $v_g^-$ such that
\[
-\dive(v_g^+D\gauge(D\uo))=g\,, \quad -\dive(v_g^-D\gauge_{-}(D\wo))=g,
\qquad \text{in\ }\Omega\,.
\]
Looking at the explicit representations of $v_g^+$ and $v_g^-$
as in~(\ref{f:vf}), we easily get
\[
v_g^+=v_g^-=0\ \text{a.e. in}\ \Omega\setminus T\,,\qquad
v_g^+>0,\ v_g^->0 \ \text{a.e. in}\ T\,,
\]
so that, for every $\varphi\in C^{\infty}_0(\Omega)$, we obtain
\[
\begin{split}
\int_\Omega & (v^+_g+v^-_g)\pscal{D\gauge(D\uo)}{D\varphi}\, dx  =
\int_T(v^+_g+v^-_g)\pscal{D\gauge(D\uo)}{D\varphi}\, dx \\
& =
\int_T v^+_g \pscal{D\gauge(D\uo)}{D\varphi}\, dx -
\int_T v^-_g \pscal{D\gauge_{-}(D \wo)}{D\varphi}\, dx \\
& =
\int_{\Omega} g\,\varphi\, dx - \int_{\Omega} g\,\varphi\, dx = 0\,.
\end{split}
\]
Hence for every $\lambda\geq 0$ the function $v_\lambda=\vf+\lambda(v^+_g+v^-_g)$ is a
non-negative weak solution to~(\ref{f:divei}).
\end{proof}

\begin{rk}
It can be easily checked that if the datum $\phi$ satisfies (H4) with strict inequality
holding for every $x\neq y$,
then the Lebesgue measure of $T$ is zero, so that $\vf$ is the unique $v$-component allowed in (\ref{f:MK}).
\end{rk}

\begin{rk}
If the Lebesgue measure of $\Omega\setminus T$ is zero,
then by Lemma~\ref{l:sinray} we have that $\spaceX=\{\uo\}$.
On the other hand, it can be easily checked that the converse
implication is also true.
In \cite{BeCe} (see also \cite{Bi}) it is proved that
either $\spaceX\neq \{\uo\}$ or there exists
a weak solution
$v\in L^1_{loc}(\Omega)$ to
\begin{equation}\label{f:homo}
\dive(v\, D\uo) = 0
\end{equation}
such that $v > 0$ a.e.\ in $\Omega$.
In the proof of Theorem~\ref{t:uniqv} we have shown that, as soon as $T$ has positive Lebesgue measure,
it is possible to construct a solution $v\in L^1_+(\Omega)$
to~(\ref{f:homo}) such that $v>0$ a.e.\ on $T$.
\end{rk}

\section{Uniqueness of the $u$-component}
\label{s:uniqueu}

In Theorem~\ref{t:equiv} we have shown that the set of admissible $u$-components
of the solutions to (\ref{f:MK}) coincides with the set of solutions to the
minimum problem (\ref{f:min}).

In this section we shall construct, using Lemma~\ref{l:genprop}, the minimal solution to
(\ref{f:min}), and we shall give a necessary and sufficient condition in order
to have that  this function equals the maximal solution $\uo$ in $\Omega$.

Let us define the function  $\uf\colon\overline{\Omega}\to\R$ by
\begin{equation}\label{f:uf2}
\uf(x) := \sup\{\uo(z)-\len{\gamma}:\ z\in\partial\Omega\cup\spt(f),\
\gamma\in\ccurve{x,z}\},
\end{equation}
where $\spt(f)$ is the
complement in $\overline{\Omega}$ of the union of all
relatively open subsets $A\subseteq\overline{\Omega}$
such that $f=0$ a.e.\ in $A$.

\begin{prop}\label{p:uf}
The function $\uf$ is characterized by
\begin{equation}\label{f:uf}
\uf(x) = \max\{\uo(z)-\pgauge(z-x):\ z\in W_x\},
\end{equation}
where $W_x := \{y\in\partial\Omega\cup\spt(f):\ \oray{y,x}\subset\Omega\setminus\spt(f)\}$,
and satisfies
\begin{itemize}
\item[(i)] $\uf\in\spaceX$;
\item[(ii)] $\uf(x)=\uo(x)$ for every $x\in\spt(f)\cup T$;
\item[(iii)]
given $u\in \spaceX$, we have $u=\uo$ on $\spt(f)$
if and only if $\uf\leq u \leq \uo$ in $\Omega$.
\end{itemize}
\end{prop}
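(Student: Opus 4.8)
The plan is to establish the characterization formula (\ref{f:uf}) first, and then derive the three properties (i)--(iii) as consequences. The function $\uf$ is defined in (\ref{f:uf2}) as a supremum over all admissible paths, so it falls exactly within the scope of Lemma~\ref{l:genprop} and Lemma~\ref{l:vischar} applied with the choice $D=\overline{\Omega}$, $C=\partial\Omega\cup\spt(f)$, and boundary datum $\psi=\uo$ restricted to $C$. The first thing I would verify is that this choice of $C$ and $D$ satisfies (\ref{f:CD}): indeed $\spt(f)$ is closed and $C\supseteq\partial\Omega=\partial D$, so $C$ is a closed subset of $D$ containing $\partial D$, and $D\setminus C = \Omega\setminus\spt(f)$ is open. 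I would then check that $\uo|_C$ satisfies the compatibility condition (\ref{f:H4}); this is immediate because $\uo\in\spaceX$ is globally $1$-Lipschitz with respect to $d_L$ by Lemma~\ref{l:diseqrho}, so its restriction to any subset of $\overline{\Omega}$ inherits the inequality. With these hypotheses verified, $\uf$ is precisely the function $\wp$ from (\ref{f:psi}) with $\psi=\uo|_C$ (note the supremum and the direction of the curves in $\curveD{x,z}$ match the definition of $\wp$), so Lemma~\ref{l:vischar} directly yields the representation (\ref{f:uf}) with $W_x = V_x^C = \{y\in C:\ \oray{y,x}\subset D\setminus C\}$, which unwinds to exactly the set stated in the proposition.

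Once the representation is in hand, property (i) follows directly from Lemma~\ref{l:genprop}: it gives $\uf\in C(\overline{\Omega})\cap W^{1,\infty}(\Omega)$ with $D\uf\in K$ a.e., and the equality $\uf=\uo=\phi$ on $C\supseteq\partial\Omega$ shows that $\uf$ attains the boundary datum $\phi$, hence $\uf\in\spaceX$. For property (ii), the case $x\in\spt(f)\subseteq C$ is trivial since $\uf=\uo$ on $C$ by Lemma~\ref{l:genprop}. For $x\in T$, I would use Lemma~\ref{l:sinray}: since $\uf\in\spaceX$ by (i), part (ii) of that lemma forces $\uf$ to coincide with $\uo$ on every transport ray with both endpoints on $\partial\Omega$, which is precisely what $T$ is composed of. Alternatively one sees directly that on such a ray the maximal-slope constraint pins $\uf$ to the single admissible profile.

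The main work, and the step I expect to be the genuine obstacle, is property (iii), the characterization $u=\uo$ on $\spt(f) \iff \uf\leq u\leq\uo$. The implication from the sandwich inequality to $u=\uo$ on $\spt(f)$ is easy: on $\spt(f)$ we have $\uo=\uf$ by (ii), so $\uo=\uf\leq u\leq\uo$ squeezes $u=\uo$ there. The reverse implication is the crux. The upper bound $u\leq\uo$ holds for every $u\in\spaceX$ by the maximality of $\uo$ (Theorem~\ref{t:genprop}). For the lower bound, suppose $u\in\spaceX$ with $u=\uo$ on $\spt(f)$; I would show $u\geq\uf$ by arguing that $u$ is an admissible competitor in the supremum (\ref{f:uf2}) defining $\uf$. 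Concretely, fix $x\in\Omega$ and let $z\in W_x$ realize the maximum in (\ref{f:uf}), so $\uf(x)=\uo(z)-\pgauge(z-x)$ with $\oray{z,x}\subset\Omega\setminus\spt(f)$. Since $u\in\spaceX$ is $1$-Lipschitz with respect to $\pgauge$ (Lemma~\ref{l:diseqrho}), we have $u(z)-u(x)\leq\pgauge(z-x)$, i.e. $u(x)\geq u(z)-\pgauge(z-x)$. If $z\in\partial\Omega$ then $u(z)=\phi(z)=\uo(z)$, and if $z\in\spt(f)$ then $u(z)=\uo(z)$ by hypothesis; in either case $u(x)\geq\uo(z)-\pgauge(z-x)=\uf(x)$, giving the desired bound. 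The delicate point to handle carefully is that $W_x$ may contain points of $\spt(f)$ that are interior to $\Omega$, which is exactly why the hypothesis $u=\uo$ on all of $\spt(f)$ (not merely on $\partial\Omega$) is needed; I would make sure the maximizing $z$ is covered by one of the two cases and that the segment $\oray{z,x}$ avoiding $\spt(f)$ does not interfere with the Lipschitz estimate, which it does not since the estimate is purely a two-point inequality.
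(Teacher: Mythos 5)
Your proposal is correct and follows essentially the same route as the paper: the paper's proof simply identifies $\uf$ with the function $\wp$ of (\ref{f:psi}) for $D=\overline{\Omega}$, $C=\partial\Omega\cup\spt(f)$, $\psi=\uo|_C$, checks (\ref{f:H4}), and cites Lemmas~\ref{l:genprop}, \ref{l:vischar} and~\ref{l:sinray}. Your explicit two-point Lipschitz argument for part (iii) is just an unwinding of the inequality $\wp\leq u\leq\up$ from Lemma~\ref{l:vischar} applied to the class $Z_\psi$, so no new idea is involved.
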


\begin{proof}
The function $\uf$ is the function $\wp$ defined in (\ref{f:psi})
corresponding to $D=\overline{\Omega}$,
$C= \partial\Omega\cup\spt(f)$,
and $\psi = \uo$ on $C$.
Clearly the function $\psi$ satisfies (\ref{f:H4}) on $C$,
since $\uo$ satisfies that condition
on $\overline{\Omega}$.
Then the stated properties follow from
Lemmas~\ref{l:genprop}, \ref{l:vischar} and~\ref{l:sinray}.
\end{proof}

\begin{teor}
A function $u\in\spaceX$ is a solution to~(\ref{f:min})
if and only if $\uf\leq u\leq\uo$.
Moreover, the function $\uf$ coincides with $\uo$ in $\Omega$
(and hence $\uo$ is the unique solution to (\ref{f:min}))
if and only if $J\cap \Omega \subseteq \spt(f)$.
\end{teor}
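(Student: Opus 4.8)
The theorem has two parts. The first part characterizes the solutions to the minimum problem \eqref{f:min} as exactly the functions $u\in\spaceX$ with $\uf\leq u\leq\uo$. The second part gives the sharp condition $J\cap\Omega\subseteq\spt(f)$ under which $\uf=\uo$, so that uniqueness holds.

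For the first part, I would use the equivalence already established in Theorem~\ref{t:equiv}(i), which reduces solving \eqref{f:min} to finding admissible $u$-components of \eqref{f:MK}. The key observation is that $u\in\spaceX$ minimizes $-\int_\Omega fu$ if and only if $f(u-\uo)=0$ a.e., i.e.\ $u=\uo$ on $\spt(f)$ (using $f\geq 0$, $u\leq\uo$ by maximality of $\uo$, and the definition of $\spt(f)$). But Proposition~\ref{p:uf}(iii) says precisely that $u=\uo$ on $\spt(f)$ is equivalent to $\uf\leq u\leq\uo$. This chain of equivalences gives the first assertion directly, so the first part is essentially a bookkeeping argument assembling results already proved.

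For the second part, I would argue the equivalence $\uf=\uo$ in $\Omega \iff J\cap\Omega\subseteq\spt(f)$ by unwinding the geometry of the representation formula \eqref{f:uf}. The plan is to show $\uf(x)<\uo(x)$ for some $x$ precisely when the transport ray through some point fails to reach $\spt(f)$ before its endpoint $q(\cdot)\in\Omega$. Concretely, for the forward direction, suppose $J\cap\Omega\not\subseteq\spt(f)$, so there is a point $q(x_0)\in J\cap\Omega$ with $q(x_0)\notin\spt(f)$; then one can find a point on the open transport ray from which $\spt(f)$ is not visible along the ray direction, forcing the sup in \eqref{f:uf} to be computed over a $W_x$ that omits the natural maximizer $p(x)$, yielding $\uf(x)<\uo(x)$. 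For the converse, if $J\cap\Omega\subseteq\spt(f)$, I would show that for every $x\in\Omega$ the projection point $p(x)$ (or, in the case $x\in D$, some element of $\proj(x)$) together with the ray structure supplies an admissible $z\in W_x$ with $\uo(z)-\pgauge(z-x)=\uo(x)$, so that $\uf(x)\geq\uo(x)$, and hence equality. Here Lemma~\ref{l:prunica} (linearity of $\uo$ along rays) and the fact that the endpoint $q(x)\in J$ lies in $\spt(f)$ are the essential tools: walking backward along the ray from $q(x)$, one stays in a region where $\spt(f)$ becomes visible at the endpoint.

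I expect the main obstacle to be the converse direction of the second part, specifically handling the set $D$ of points with multiple projections and making the visibility condition ``$\oray{y,x}\subset\Omega\setminus\spt(f)$'' precise when the ray hits $\spt(f)$ exactly at its endpoint $q(x)\in J\cap\Omega$. One must verify that $q(x)$ itself is an admissible competitor in $W_x$ (it lies in $\spt(f)$ by hypothesis, and the open segment back to $x$ avoids $\spt(f)$ by the definition of $\tau$ and $q$), and that $\uo(q(x))-\pgauge(q(x)-x)=\uo(x)$, which follows from \eqref{f:ulin2}. The care needed is that $\tau(x)$ could be the first time the ray meets $\spt(f)$ rather than the time it meets $\partial\Omega$; reconciling the definition of $q(x)$ as a ray endpoint with the visibility requirement in $W_x$ is the delicate point, and I would treat the generic case $x\in\Omega\setminus D$ first and then reduce the case $x\in D$ to it by a limiting or direct argument using that $D\subseteq J\subseteq\spt(f)$ under the hypothesis.
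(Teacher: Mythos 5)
Your treatment of the first assertion is correct and is exactly the paper's argument: minimality of $-\int_\Omega fu$ is equivalent to $u=\uo$ on $\spt(f)$, which Proposition~\ref{p:uf}(iii) converts into $\uf\leq u\leq\uo$. The implication $J\cap\Omega\subseteq\spt(f)\Rightarrow\uf=\uo$ is also essentially right, although one of your supporting claims is false: the open segment $\oray{x,q(x)}$ need \emph{not} avoid $\spt(f)$, because $\tau$ and $q$ are defined in (\ref{f:tau})--(\ref{f:qj}) purely by the linearity of $\uo$ along the ray and have nothing to do with $f$ (take $f>0$ on all of $\Omega$). This is harmless --- either $x\in\spt(f)$ and Proposition~\ref{p:uf}(ii) applies directly, or one replaces $q(x)$ by the first point of $\spt(f)\cup\partial\Omega$ met along the ray --- and the paper sidesteps the visibility issue altogether by writing $\uo(x)=\uo(q(x))-\pgauge(q(x)-x)=\uf(q(x))-\pgauge(q(x)-x)\leq\uf(x)$, using only that $\uf$ is $1$-Lipschitz for $\pgauge$ and equals $\uo$ on $J$.

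The genuine gap is in the opposite implication. Your mechanism --- that $W_x$ ``omits the natural maximizer $p(x)$'' --- cannot work, because $p(x)$ is never a maximizer in (\ref{f:uf}): since $\uo(x)=\phi(p(x))+\pgauge(x-p(x))$, one has $\uo(p(x))-\pgauge(p(x)-x)=\uo(x)-\pgauge(x-p(x))-\pgauge(p(x)-x)<\uo(x)$, so $p(x)$ gives a value strictly below $\uo(x)$ whether or not it belongs to $W_x$. The points $z$ that can realize $\uf(x)=\uo(x)$ are those \emph{uphill} from $x$, i.e.\ with $\uo(z)=\uo(x)+\pgauge(z-x)$, and the heart of the proof is to show that any such $z\in W_{x_0}$ must be aligned with $x_0$ and a projection $y\in V_{x_0}$, with $x_0\in\oray{y,z}$. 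This is exactly where Lemma~\ref{l:geoint} enters (a geodesic through the interior is a segment, so non-collinearity of $y$, $x_0$, $z$ forces a strict triangle inequality and a contradiction), and your sketch never invokes it. Once collinearity is established one concludes that $\oray{y,z}$ is a transport ray containing $x_0$ in its interior, hence $x_0\notin J$, contradicting $x_0\in J\cap\Omega\setminus\spt(f)$. Without this alignment step you cannot exclude that some $z\in W_x$ off the ray attains the value $\uo(x)$, so the strict inequality $\uf(x)<\uo(x)$ does not follow from your argument as written.
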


\begin{proof}
The first assertion follows from Proposition~\ref{p:uf}(iii)
and from the fact that $u\in\spaceX$ is a solution
to (\ref{f:min}) if and only if
$u=\uo$ on $\spt(f)$.
Let us prove the uniqueness result.
Assume that $J\cap \Omega \subseteq \spt(f)$, so that $J\subset\partial\Omega\cup\spt(f)$
and, from Proposition~\ref{p:uf}(ii), $\uf = \uo$ on $J$.
Let $x\in \Omega \setminus J$ be given, and let $q(x)\in J$
be the endpoint of the ray through $x$. We have
\[
\uo(x)=\uo(q(x))-\pgauge(q(x)-x) =\uf(q(x))-\pgauge(q(x)-x)\leq \uf(x)\leq  \uo(x)\,,
\]
and hence $\uf(x)=\uo(x)$.

Assume now by contradiction that $\uf=\uo$ in $\Omega$ and that there exists
$x_0\in J\cap\Omega$ such that $x_0\not\in \spt(f)$.
Let $z\in W_{x_0}$ and
$y\in V_{x_0}$ be such that
\begin{gather}
\uf(x_0)=\uo(z)-\pgauge(z-x_0), \label{f:zdix} \\
\uo(x_0)=\uo(y)+\pgauge(x_0-y). \label{f:ydix}
\end{gather}
Notice that $y\neq z$, otherwise we get $-\pgauge(y-x_0)=\pgauge(x_0-y)$. Hence $x_0$,
$y$ and $z$ are three distinct points.
Moreover we have that $x_0\in\oray{y,z}$,
otherwise by Lemma~\ref{l:geoint} we should have
\[
\uo(z)-\uo(y) < \pgauge(z-x_0) + \pgauge(x_0-y)
\]
and
\[
\uo(x_0) = \uo(z) - \pgauge(z-x_0)
< \uo(y) + \pgauge(x_0-y) = \uo(x_0),
\]
a contradiction.

Finally, the fact that $x_0\in \oray{y,z}$ implies that
\[
\begin{split}
\uo(z) & = \uo(x_0) + \pgauge(z - x_0)
= \uo(y) + \pgauge(x_0-y) + \pgauge(z-x_0)
\\ & = \uo(y) + \pgauge(z-y),
\end{split}
\]
hence $\oray{y,z}$ is a transport ray through $x_0$,
so that $x_0\not\in J$.
\end{proof}


\begin{thebibliography}{10}

\bibitem{Amb}
{L.} Ambrosio, \emph{Lecture notes on optimal transport problems}, Mathematical
  Aspects of Evolving Interfaces, Lecture Notes in Math., vol. 1812,
  Springer-Verlag, Berlin/New York, 2003, pp.~1--52.

\bibitem{AmTi}
{L.} Ambrosio and {P.} Tilli, Topics on analysis in metric spaces, Oxford
  Lecture Series in Mathematics and its Applications, vol.~25, Oxford
  University Press, Oxford, 2004. \MR{2039660 (2004k:28001)}

\bibitem{Ar-b}
G.~Aronsson, \emph{Interpolation under a gradient bound}, J. Aust. Math. Soc.
  \textbf{87} (2009), 19--35.

\bibitem{AEW}
{G.} Aronsson, {L.\ C.} Evans, and {Y.} Wu, \emph{Fast/slow diffusion and
  growing sandpiles}, J.\ Differential Equations \textbf{131} (1996), no.~2,
  304--335. \MR{MR1419017 (97i:35068)}

\bibitem{BeCe}
{S.} Bertone and {A.} Cellina, \emph{On the existence of variations, possibly
  with pointwise gradient constraints}, ESAIM Control Optim. Calc. Var.
  \textbf{13} (2007), 331--342.

\bibitem{Bi}
{S.} Bianchini, \emph{On the {E}uler-{L}agrange equation for a variational
  problem}, Discrete Contin. Dyn. Syst. \textbf{17} (2007), 449--480.

\bibitem{BiGl}
{S.} Bianchini and {M.} Gloyer, \emph{On the {E}uler-{L}agrange equation for a
  variational problem: the general case {II}}, Math. Z. \textbf{265} (2010),
  no.~4, 889--923. \MR{2652541}

\bibitem{BoBu}
{G.} Bouchitt\'e and {G.} Buttazzo, \emph{Characterization of optimal shapes
  and masses through {M}onge-{K}antorovich equation}, J.\ Eur.\ Math.\ Soc.
  \textbf{3} (2001), 139--168.

\bibitem{BuBuIv}
{D.} Burago, {Yu.} Burago, and {S.} Ivanov, A course in metric geometry,
  Graduate Studies in Mathematics, vol.~33, American Mathematical Society,
  Providence, RI, 2001. \MR{1835418 (2002e:53053)}

\bibitem{CaCa}
{P.} Cannarsa and {P.} Cardaliaguet, \emph{Representation of equilibrium
  solutions to the table problem for growing sandpiles}, J.\ Eur.\ Math.\ Soc.\
  (JEMS) \textbf{6} (2004), 435--464.

\bibitem{CCCG}
{P.} Cannarsa, {P.} Cardaliaguet, {G.} Crasta, and {E.} Giorgieri, \emph{A
  Boundary Value Problem for a {PDE} Model in Mass Transfer Theory:
  Representation of Solutions and Applications}, Calc.\ Var.\ Partial
  Differential Equations \textbf{24} (2005), 431--457.

\bibitem{CCS}
{P.} Cannarsa, {P.} Cardaliaguet, and {C.} Sinestrari, \emph{On a differential
  model for growing sandpiles with non-regular sources}, Comm. Partial
  Differential Equations \textbf{34} (2009), no.~7-9, 656--675. \MR{MR2560296}

\bibitem{CaMS}
{P.} Cannarsa, {A.} Mennucci, and {C.} Sinestrari, \emph{Regularity results for
  solutions of a class of {H}amilton-{J}acobi equations}, Arch.\ Rational
  Mech.\ Anal. \textbf{140} (1997), 197--223.

\bibitem{CaSi}
{P.} Cannarsa and {C.} Sinestrari, Semiconcave functions, {H}amilton-{J}acobi
  equations and optimal control, Progress in Nonlinear Differential Equations
  and their Applications, vol.~58, Birkh\"auser, Boston, 2004.

\bibitem{Ces}
{L.} Cesari, Optimization---theory and applications, Applications of
  Mathematics (New York), vol.~17, Springer-Verlag, New York, 1983.

\bibitem{CMf}
{G.} Crasta and {A.} Malusa, \emph{The distance function from the boundary in a
  {M}inkowski space}, Trans.\ Amer.\ Math.\ Soc. \textbf{359} (2007),
  5725--5759.

\bibitem{CMg}
{G.} Crasta and {A.} Malusa, \emph{On a system of partial differential
  equations of {M}onge-{K}antorovich type}, J.\ Differential Equations
  \textbf{235} (2007), 484--509.

\bibitem{CMi}
{G.} Crasta and {A.} Malusa, \emph{A sharp uniqueness result for a class of
  variational problems solved by a distance function}, J.\ Differential
  Equations \textbf{243} (2007), 427--447.

\bibitem{CMh}
{G.} Crasta and {A.} Malusa, \emph{A variational approach to the macroscopic
  electrodynamics of anisotropic hard superconductors}, Arch.\ Rational Mech.\
  Anal. \textbf{192} (2009), 87--115.

\bibitem{Dei}
{K.} Deimling, Nonlinear Functional Analysis, Springer-Verlag, Berlin, 1985.

\bibitem{EH}
{W.D.} Evans and {D.J.} Harris, \emph{Sobolev embeddings for generalized ridged
  domains}, Proc.\ London Math.\ Soc. \textbf{54} (1987), 141--175.

\bibitem{Gromov}
{M.} Gromov, Metric structures for {R}iemannian and non-{R}iemannian spaces,
  english ed., Modern Birkh\"auser Classics, Birkh\"auser Boston Inc., Boston,
  MA, 2007, Based on the 1981 French original, With appendices by M. Katz, P.
  Pansu and S. Semmes, Translated from the French by Sean Michael Bates.
  \MR{MR2307192 (2007k:53049)}

\bibitem{HK}
{K.P.} Hadeler and {C.} Kuttler, \emph{Dynamical models for granular matter},
  Granular\ Matter \textbf{2} (1999), 9--18.

\bibitem{IT}
{J.} Itoh and {M.} Tanaka, \emph{The {L}ipschitz continuity of the distance
  function to the cut locus}, Trans.\ Amer.\ Math.\ Soc. \textbf{353} (2001),
  21--40.

\bibitem{LN}
{Y.Y.} Li and {L.} Nirenberg, \emph{The distance function to the boundary,
  {F}insler geometry and the singular set of viscosity solutions of some
  {H}amilton--{J}acobi equations}, Commun.\ Pure Appl.\ Math. \textbf{58}
  (2005), 85--146.

\bibitem{Li}
{P.L.} Lions, Generalized solutions of {H}amilton-{J}acobi equations, Pitman,
  Boston, 1982.

\bibitem{MM}
{C.} Mantegazza and {A.C.} Mennucci, \emph{Hamilton-{J}acobi equations and
  distance functions on {R}iemannian manifolds}, Appl.\ Math.\ Optim.
  \textbf{47} (2003), 1--25.

\bibitem{Pr}
{L.} Prigozhin, \emph{Variational model of sandpile growth}, European J.\
  Appl.\ Math. \textbf{7} (1996), 225--235.

\bibitem{Rock}
{R.T.} Rockafellar, Convex Analysis, Princeton Univ.\ Press, Princeton, NJ,
  1970.

\bibitem{Sch}
{R.} Schneider, Convex bodies: the {B}runn--{M}inkowski theory, Cambridge
  Univ.~Press, Cambridge, 1993.


\bibitem{Vil}
{C.} Villani, Optimal transport, Grundlehren der Mathematischen
  Wissenschaften [Fundamental Principles of Mathematical Sciences], vol. 338,
  Springer-Verlag, Berlin, 2009, Old and new. \MR{2459454 (2010f:49001)}

\end{thebibliography}

\providecommand{\bysame}{\leavevmode\hbox to3em{\hrulefill}\thinspace}
\providecommand{\MR}{\relax\ifhmode\unskip\space\fi MR }
\providecommand{\MRhref}[2]{%
  \href{http://www.ams.org/mathscinet-getitem?mr=#1}{#2}
}
\providecommand{\href}[2]{#2}

\end{document}